\numberwithin{equation}{section}
\theoremstyle{plain}
\newtheorem{theorem}{Theorem}[section]
\newtheorem{proposition}[theorem]{Proposition}
\newtheorem{lemma}[theorem]{Lemma}
\theoremstyle{definition}
\theoremstyle{remark}
\newtheorem*{remark}{Remark}
\newtheorem*{example}{Example}
\begin{document}
%

\newcommand{\MgNekp}{\mathcal{M}_{g,N+1}^{(k,p)}} 
\newcommand{\M}{\mathcal{M}_{g,N+1}^{(1)}}
\newcommand{\Teich}{\mathcal{T}_{g,N+1}^{(1)}}
\newcommand{\T}{\mathrm{T}}
\newcommand{\corr}{\bf}
\newcommand{\vac}{|0\rangle}
\newcommand{\Ga}{\Gamma}
\newcommand{\new}{\bf}
\newcommand{\define}{\def}
\newcommand{\redefine}{\def}
\newcommand{\Cal}[1]{\mathcal{#1}}
\renewcommand{\frak}[1]{\mathfrak{{#1}}}
\newcommand{\refE}[1]{(\ref{E:#1})}
\newcommand{\refS}[1]{Section~\ref{S:#1}}
\newcommand{\refSS}[1]{Section~\ref{SS:#1}}
\newcommand{\refT}[1]{Theorem~\ref{T:#1}}
\newcommand{\refO}[1]{Observation~\ref{O:#1}}
\newcommand{\refP}[1]{Proposition~\ref{P:#1}}
\newcommand{\refD}[1]{Definition~\ref{D:#1}}
\newcommand{\refC}[1]{Corollary~\ref{C:#1}}
\newcommand{\refL}[1]{Lemma~\ref{L:#1}}
\newcommand{\R}{\ensuremath{\mathbb{R}}}
\newcommand{\C}{\ensuremath{\mathbb{C}}}
\newcommand{\N}{\ensuremath{\mathbb{N}}}
\newcommand{\Q}{\ensuremath{\mathbb{Q}}}
\renewcommand{\P}{\ensuremath{\mathcal{P}}}
\newcommand{\Z}{\ensuremath{\mathbb{Z}}}
\newcommand{\kv}{{k^{\vee}}}
\renewcommand{\l}{\lambda}
\newcommand{\gb}{\overline{\mathfrak{g}}}
\newcommand{\hb}{\overline{\mathfrak{h}}}
\newcommand{\g}{\mathfrak{g}}
\newcommand{\h}{\mathfrak{h}}
\newcommand{\gh}{\widehat{\mathfrak{g}}}
\newcommand{\ghN}{\widehat{\mathfrak{g}_{(N)}}}
\newcommand{\gbN}{\overline{\mathfrak{g}_{(N)}}}
\newcommand{\tr}{\mathrm{tr}}
\newcommand{\sln}{\mathfrak{sl}}
\newcommand{\sn}{\mathfrak{s}}
\newcommand{\so}{\mathfrak{so}}
\newcommand{\spn}{\mathfrak{sp}}
\newcommand{\tsp}{\mathfrak{tsp}(2n)}
\newcommand{\gl}{\mathfrak{gl}}
\newcommand{\slnb}{{\overline{\mathfrak{sl}}}}
\newcommand{\snb}{{\overline{\mathfrak{s}}}}
\newcommand{\sob}{{\overline{\mathfrak{so}}}}
\newcommand{\spnb}{{\overline{\mathfrak{sp}}}}
\newcommand{\glb}{{\overline{\mathfrak{gl}}}}
\newcommand{\Hwft}{\mathcal{H}_{F,\tau}}
\newcommand{\Hwftm}{\mathcal{H}_{F,\tau}^{(m)}}

\newcommand{\car}{{\mathfrak{h}}}    
\newcommand{\bor}{{\mathfrak{b}}}    
\newcommand{\nil}{{\mathfrak{n}}}    
\newcommand{\vp}{{\varphi}}
\newcommand{\bh}{\widehat{\mathfrak{b}}}  
\newcommand{\bb}{\overline{\mathfrak{b}}}  
\newcommand{\Vh}{\widehat{\mathcal V}}
\newcommand{\KZ}{Kniz\-hnik-Zamo\-lod\-chi\-kov}
\newcommand{\TUY}{Tsuchia, Ueno  and Yamada}
\newcommand{\KN} {Kri\-che\-ver-Novi\-kov}
\newcommand{\pN}{\ensuremath{(P_1,P_2,\ldots,P_N)}}
\newcommand{\xN}{\ensuremath{(\xi_1,\xi_2,\ldots,\xi_N)}}
\newcommand{\lN}{\ensuremath{(\lambda_1,\lambda_2,\ldots,\lambda_N)}}
\newcommand{\iN}{\ensuremath{1,\ldots, N}}
\newcommand{\iNf}{\ensuremath{1,\ldots, N,\infty}}

\newcommand{\tb}{\tilde \beta}
\newcommand{\tk}{\tilde \kappa}
\newcommand{\ka}{\kappa}
\renewcommand{\k}{\kappa}

\newcommand{\Pif} {P_{\infty}}
\newcommand{\Pinf} {P_{\infty}}
\newcommand{\PN}{\ensuremath{\{P_1,P_2,\ldots,P_N\}}}
\newcommand{\PNi}{\ensuremath{\{P_1,P_2,\ldots,P_N,P_\infty\}}}
\newcommand{\Fln}[1][n]{F_{#1}^\lambda}
\newcommand{\tang}{\mathrm{T}}
\newcommand{\Kl}[1][\lambda]{\can^{#1}}
\newcommand{\A}{\mathcal{A}}
\newcommand{\U}{\mathcal{U}}
\newcommand{\V}{\mathcal{V}}
\renewcommand{\O}{\mathcal{O}}
\newcommand{\Ae}{\widehat{\mathcal{A}}}
\newcommand{\Ah}{\widehat{\mathcal{A}}}
\newcommand{\La}{\mathcal{L}}
\newcommand{\Le}{\widehat{\mathcal{L}}}
\newcommand{\Lh}{\widehat{\mathcal{L}}}
\newcommand{\eh}{\widehat{e}}
\newcommand{\Da}{\mathcal{D}}
\newcommand{\kndual}[2]{\langle #1,#2\rangle}
\newcommand{\cins}{\frac 1{2\pi\mathrm{i}}\int_{C_S}}
\newcommand{\cinsl}{\frac 1{24\pi\mathrm{i}}\int_{C_S}}
\newcommand{\cinc}[1]{\frac 1{2\pi\mathrm{i}}\int_{#1}}
\newcommand{\cintl}[1]{\frac 1{24\pi\mathrm{i}}\int_{#1 }}
\newcommand{\w}{\omega}
\newcommand{\ord}{\operatorname{ord}}
\newcommand{\res}{\operatorname{res}}
\newcommand{\nord}[1]{:\mkern-5mu{#1}\mkern-5mu:}
\newcommand{\Fn}[1][\lambda]{\mathcal{F}^{#1}}
\newcommand{\Fl}[1][\lambda]{\mathcal{F}^{#1}}
\renewcommand{\Re}{\mathrm{Re}}

\newcommand{\ha}{H^\alpha}

\define\ldot{\hskip 1pt.\hskip 1pt}
\define\ifft{\qquad\text{if and only if}\qquad}
\define\a{\alpha}
\redefine\d{\delta}
\define\w{\omega}
\define\ep{\epsilon}
\redefine\b{\beta} \redefine\t{\tau} \redefine\i{{\,\mathrm{i}}\,}
\define\ga{\gamma}
\define\cint #1{\frac 1{2\pi\i}\int_{C_{#1}}}
\define\cintta{\frac 1{2\pi\i}\int_{C_{\tau}}}
\define\cintt{\frac 1{2\pi\i}\oint_{C}}
\define\cinttp{\frac 1{2\pi\i}\int_{C_{\tau'}}}
\define\cinto{\frac 1{2\pi\i}\int_{C_{0}}}
\define\cinttt{\frac 1{24\pi\i}\int_C}
\define\cintd{\frac 1{(2\pi \i)^2}\iint\limits_{C_{\tau}\,C_{\tau'}}}
\define\cintdr{\frac 1{(2\pi \i)^3}\int_{C_{\tau}}\int_{C_{\tau'}}
\int_{C_{\tau''}}}
\define\im{\operatorname{Im}}
\define\re{\operatorname{Re}}
\define\res{\operatorname{res}}
\redefine\deg{\operatornamewithlimits{deg}}
\define\ord{\operatorname{ord}}
\define\rank{\operatorname{rank}}
\define\fpz{\frac {d }{dz}}
\define\dzl{\,{dz}^\l}
\define\pfz#1{\frac {d#1}{dz}}

\define\K{\Cal K}
\define\U{\Cal U}
\redefine\O{\Cal O}
\define\He{\text{\rm H}^1}
\redefine\H{{\mathrm{H}}}
\define\Ho{\text{\rm H}^0}
\define\A{\Cal A}
\define\Do{\Cal D^{1}}
\define\Dh{\widehat{\mathcal{D}}^{1}}
\redefine\L{\Cal L}
\newcommand{\ND}{\ensuremath{\mathcal{N}^D}}
\redefine\D{\Cal D^{1}}
\define\KN {Kri\-che\-ver-Novi\-kov}
\define\Pif {{P_{\infty}}}
\define\Uif {{U_{\infty}}}
\define\Uifs {{U_{\infty}^*}}
\define\KM {Kac-Moody}
\define\Fln{\Cal F^\lambda_n}
\define\gb{\overline{\mathfrak{ g}}}
\define\G{\overline{\mathfrak{ g}}}
\define\Gb{\overline{\mathfrak{ g}}}
\redefine\g{\mathfrak{ g}}
\define\Gh{\widehat{\mathfrak{ g}}}
\define\gh{\widehat{\mathfrak{ g}}}
\define\Ah{\widehat{\Cal A}}
\define\Lh{\widehat{\Cal L}}
\define\Ugh{\Cal U(\Gh)}
\define\Xh{\hat X}
\define\Tld{...}
\define\iN{i=1,\ldots,N}
\define\iNi{i=1,\ldots,N,\infty}
\define\pN{p=1,\ldots,N}
\define\pNi{p=1,\ldots,N,\infty}
\define\de{\delta}

\define\kndual#1#2{\langle #1,#2\rangle}
\define \nord #1{:\mkern-5mu{#1}\mkern-5mu:}
\define \sinf{{\widehat{\sigma}}_\infty}
\define\Wt{\widetilde{W}}
\define\St{\widetilde{S}}
\newcommand{\SigmaT}{\widetilde{\Sigma}}
\newcommand{\hT}{\widetilde{\frak h}}
\define\Wn{W^{(1)}}
\define\Wtn{\widetilde{W}^{(1)}}
\define\btn{\tilde b^{(1)}}
\define\bt{\tilde b}
\define\bn{b^{(1)}}
%
\define\eps{\varepsilon}    
\define\doint{({\frac 1{2\pi\i}})^2\oint\limits _{C_0}
       \oint\limits _{C_0}}                            
\define\noint{ {\frac 1{2\pi\i}} \oint}   
\define \fh{{\frak h}}     
\define \fg{{\frak g}}     
\define \GKN{{\Cal G}}   
\define \gaff{{\hat\frak g}}   
\define\V{\Cal V}
\define \ms{{\Cal M}_{g,N}} 
\define \mse{{\Cal M}_{g,N+1}} 
\define \tOmega{\Tilde\Omega}
\define \tw{\Tilde\omega}
\define \hw{\hat\omega}
\define \s{\sigma}
\define \car{{\frak h}}    
\define \bor{{\frak b}}    
\define \nil{{\frak n}}    
\define \vp{{\varphi}}
\define\bh{\widehat{\frak b}}  
\define\bb{\overline{\frak b}}  
\define\Vh{\widehat V}
\define\KZ{Knizhnik-Zamolodchikov}
\define\ai{{\alpha(i)}}
\define\ak{{\alpha(k)}}
\define\aj{{\alpha(j)}}
\newcommand{\laxgl}{\overline{\mathfrak{gl}}}
\newcommand{\laxsl}{\overline{\mathfrak{sl}}}
\newcommand{\laxso}{\overline{\mathfrak{so}}}
\newcommand{\laxsp}{\overline{\mathfrak{sp}}}
\newcommand{\laxs}{\overline{\mathfrak{s}}}
\newcommand{\laxg}{\overline{\frak g}}
\newcommand{\bgl}{\laxgl(n)}
\newcommand{\tX}{\widetilde{X}}
\newcommand{\tY}{\widetilde{Y}}
\newcommand{\tZ}{\widetilde{Z}}

\vspace*{-1cm}
%
%
%
\vspace*{2cm}

\large{
\title[Lax equations and Knizhnik-Zamolodchikov connection]
{Lax equations and Knizhnik-Zamolodchikov connection}
\author[O.K. Sheinman]{Oleg K. Sheinman}
\thanks{Supported in part
by the program "Fundamental Problems of Nonlinear Dynamics" of the
Russian Academy of Sciences}


\begin{abstract}
Given a Lax system of equations with the spectral parameter on a
Riemann surface we construct a projective unitary representation
of the Lie algebra of Hamiltonian vector fields by
Knizhnik-Zamolodchikov operators. This provides a prequantization
of the Lax system. The representation operators of Poisson
commuting Hamiltonians of the Lax system projectively commute. If
Hamiltonians depend only on action variables then the
corresponding operators commute
\end{abstract} \subjclass{17B66,
17B67, 14H10, 14H15, 14H55,  30F30, 81R10, 81T40} \keywords{
Current algebra, Lax operator algebra, Lax integrable system,
Knizhnik-Zamolodchikov connection}
\maketitle


\section{Introduction}\label{S:intro}

In \cite{Klax} I.Krichever proposed a new notion of Lax operator
with a spectral parameter on a Riemann surface. He has given a
general and transparent treatment of Hamiltonian theory of the
corresponding Lax equations. This work has called into being the
notion of Lax operator algebras \cite{KSlax} and consequent
generalization of the Krichever's approach on Lax operators taking
values in the classical Lie algebras over $\C$
\cite{Sh_lopa,Sh0910_Hamilt}. The corresponding class of Lax
integrable systems contains Hitchin systems and their analog for
pointed Riemann surfaces, integrable gyroscopes and similar
examples.

In the present paper we address the following problem: given a Lax
integrable system of the just mentioned type, to construct a
unitary projective representation of the corresponding Lie algebra
of Hamiltonian vector fields. For the Lax equations in question,
we propose a way to represent Hamiltonian vector fields by
covariant derivatives with respect to the Knizhnik-Zamolodchikov
connection. It is conventional that Knizhnik-Zamolodchikov-Bernard
operators provide a quantization of Calogero-Moser and Hitchin
second order Hamiltonians \cite{FeW,D_I}. Unexpectedly, we
observed such relation for all Hamiltonians, and, moreover, for
all observables of the Hamiltonian system given by the Lax
equations in question.

Consider the phase space $\P^D$ of the Lax system in the Krichever
formalism. Every element of $\P^D$ is a meromorphic $\g$-valued
function on a Riemann surface $\Sigma$ satisfying certain
constrains where $\g$ is a classical Lie algebra over $\C$. Assign
every $L\in\P^D$ with its spectral curve $\det(L(z)-\l)~=~0$ which
is a finite branch covering of $\Sigma$. Thus we have a family of
Riemann surfaces over $\P^D$. Following the lines of \cite{WZWN2}
(see also \cite{ShDiss}) we construct an analog of the
Knizhnik-Zamolodchikov connection for this family.  For this end,
we construct a sheaf of admissible representations of the Lax
operator algebras canonically related to the integrable system.
For any tangent vector $X$ to $\P^D$ denote by $\rho(X)$ its image
under the Kodaira-Spencer mapping. In general $\rho(X)\in
H^1(\Sigma_L, T\Sigma_L)$ where $\Sigma_L$ is the spectral curve
at the corresponding point $L\in\P^D$ and $T\Sigma_L$ is its
tangent sheaf. It is shown in \cite{WZWN2} that $\rho(X)$ can be
considered as a Krichever-Novikov vector field, and the arising
ambiguity is compensated by passing to a certain quotient sheaf
called a sheaf of {\it coinvariants}. Hence we may consider
$T(\rho(X))$ where $T$ is the Sugawara representation, and further
on define the high genus Knizhnik-Zamolodchikov connection
\[  \nabla_{X}=\partial_{X}+T(\rho(X)).
\]
By projective flatness of $\nabla$ \cite{WZWN2} we have
\[
[\nabla_{X},\nabla_{Y}]=\nabla_{[X,Y]}+\l(X,Y)\cdot id ,
\]
i.e. $X\to\nabla_X$ is a projective representation of the Lie
algebra of vector fields on $\P^D$.

Now observe that $\P^D$ is a symplectic manifold with respect to
the Krichever-Phong symplectic structure $\w$ \cite{Klax,KrPhong}.
For any $f\in C^\infty(\P^D)$ we can consider the corresponding
Hamiltonian vector field $X_f$. The correspondence $f\to
\nabla_{X_f}$ is a representation of the Poisson algebra of
classical observables of the Lax integrable system. An easy
corollary is as follows. If $\{H_a\}$ is a Poisson commuting
family of Hamiltonians labelled by an index $a$ then the
corresponding Knizhnik-Zamolodchikov operators projectively
commute.

The just introduced representation of the Lie algebra of
Hamiltonian vector fields is unitary. This claim relies on two
facts. First, by Poincar\'e theorem the symplectic form and its
degrees are absolute integral invariants of Hamiltonian phase
flows. Hence the measure on $\P^D$ defined by the volume form
$\w^p/p!$ (where $p=(\dim\P^D)/2$) is invariant with respect to
the Hamiltonian flows. For this reason the $\partial_X$ operator
is skew-symmetric for any real Hamiltonian $X$ in a certain
subspace in $L^2(\P^D,\w^p/p!)$. Second, the Sugawara
representation is unitary as soon as the underlying representation
of the current algebra possesses this property. For the  Virasoro
and loop algebras it is proved in \cite{KaRa}, and for
Krichever-Novikov vector field and commutative current algebras in
\cite{KNFb}.

The subject of the paper certainly must be discussed in the
context of quantization of Lax integrable systems. Our work is
very similar to Hitchin \cite{Hit}. Both are devoted to the
problem of a correspondence between an integrable system and a
connection on a certain moduli space, in a different set-up. In
\cite{Hit} N.J.Hitchin notes two problems of his approach: taking
into account the marked points on Riemann surfaces and unitarity
of the connection. He points out that the Knizhnik-Zamolodchikov
connection could be a solution of the first problem. As it is
shown below, it resolves also the second one.

A large number of works is devoted to quantum integrable systems.
Let us give here a brief outline of the construction due to
B.Feigin and E.Frenkel \cite{FF}, A.Beilinson and V.Drinfeld
\cite{BD}. Denote by $U(\g)_\k$ the universal enveloping algebra
of $\g$ on the level $\k$. There is the only case when $U(\g)_\k$
has an infinite-dimensional center, namely if $\k$ is equal to the
dual Coxeter number for $\g$. By means of the double-coset
construction one can obtain an action of the finite-dimensional
quotient space of the center by differential operators in the
smooth sections of some vector bundle on the moduli space of
holomorphic vector bundles on a Riemann surface. Indeed, $\mathcal
M=G_{left}\setminus G/G_+$ where $\mathcal M$ is the moduli space
of holomorphic vector bundles, $G$ is the loop group corresponding
to $\g$, $G_+$ is its subgroup corresponding to $\gb_+$ and
$G_{left}$ is a certain subgroup of $G_-$. The center of
$U(\g)_\k$ acts on $G$ by casimirs. This action pushes down to
$\mathcal M $. Not every vector bundle on $\mathcal M$ admits a
nontrivial space of differential operators. It exists in a
more-less unique case of $\sqrt{\mathcal K}$ --- the bundle of
half-forms on $\mathcal M$ (where $\mathcal K$ is the canonical
bundle on $\mathcal M$). This construction results in the quantum
Hitchin system in a sense that the symbols of the obtained
operators are equal to the corresponding classical Hitchin
Hamiltonians.

Observe that the authors do not quantize the full algebra of
observables but only its commutative subalgebra (see
\cite[2.2.5]{BD} for example).

Especially detailed information is obtained on quantum
Calogero-Moser systems due to the works of A.P.Veselov,
A.N.Sergeev, G.Felder, M.V.Feigin.

The idea of quantization of Hitchin systems by means
Knizhnik-Zamolodchikov connection was also addressed, or at least
mentioned, many times in the theoretical physics literature
(D.Ivanov \cite{D_I}, G.Felder and  Ch.Wieczerkowski \cite{FeW},
M.A.Olshanetsky and A.M.Levin) but only the second order
Hamiltonians have been involved.

To conclude with, let us note that the results of the present
paper provide a prequantization of the Lax integrable systems with
the spectral parameter on a Riemann surface. We prequantize the
whole algebra of observables rather than  any commutative
subalgebra of it.

The paper is organized as follows. In \refS{phase} we give a
description of Lax integrable systems with the spectral parameter
on a Riemann surface. This section is a survey of the results of
\cite{Klax,{KSlax},{Sh_lopa},{Sh0910_Hamilt}}. We introduce the
phase parameters and explain their relation to the Tjurin
parametrization of holomorphic vector bundles on Riemann surfaces.
Then we define the notion of a Lax operator with the spectral
parameter on a Riemann surface, and values in a classical complex
Lie algebra. We introduce the corresponding Lax equations,
construct their hierarchy of commuting flows and present their
Hamiltonian theory including the  Krichever-Phong symplectic
structure.

In \refS{CFT}, we introduce a conformal field theory related to an
integrable system of the above described type. By conformal field
theory we mean a family of Riemann surfaces, a finite rank bundle
(of coinvariants) on this family, and a flat connection (the
Knizhnik-Zamolodchikov connection) on this bundle.

As a family of Riemann surfaces we take the family of spectral
curves corresponding to the Lax integrable system in question. We
recall from \cite{WZWN2,{ShDiss}} the above mentioned version of
the Kodaira-Spencer mapping. Further on we introduce a certain
commutative current Krichever-Novikov algebra which is required by
the Sugawara construction. For a Lax operator algebra this
commutative algebra plays a role similar to Cartan subalgebra in
the Kac-Moody and Cartan-Weil theories. At last, we recall from
\cite{Sh_ferm} the construction of the fermionic representation of
Krichever-Novikov current algebras and carry out the Sugawara
construction.

In \refS{repr} we we formulate and prove main results of the
paper. We construct the Knizhnik-Zamolodchikov connection on the
family of spectral curves and prove that the
Knizhnik-Zamolodchikov operators give a projective unitary
representation of the Lie algebra of Hamiltonian vector fields. As
a corollary we obtain that the operators corresponding to the
family of commuting Hamiltonians commute up to scalar operators.

The author is grateful to I.M.Krichever and to M.Schlichenmaier
for many fruitful discussions. The majority of authors results the
present work relies on are obtained in collaboration with them. I
am also grateful to D.Talalaev. Our discussions were helpful in
order to realize a role of the spectral curve in quantization. I
am thankful to A.P.Veselov and M.A.Olshanetsky for useful
discussions.

\section{Phase space and Hamiltonians of a Lax integrable system}\label{S:phase}

In the present section following the lines of
\cite{Klax,Sh0910_Hamilt} we consider a certain class of
integrable systems given by $\g$-valued (in particular
matrix-valued) Lax operators of zero order with a spectral
parameter on a Riemann surface. The examples include
Calogero-Moser systems, Hitchin systems and their generalizations,
gyroscopes etc.

\subsection{Geometric data} Every integrable system in question
is given by the following geomerical data: a Riemann surface
$\Sigma$ with a given complex structure, a classical Lie algebra
$\g$ over $\C$, fixed points $P_1,\ldots,\linebreak P_N\in\Sigma$
($N\in\Z_+$), a positive divisor $D=\sum\limits_{i=1}^{N}m_iP_i$,
points $\ga_1,\ldots, \ga_{K}\in\Sigma$ ($K\in\Z_+$), vectors
$\a_1,\ldots, \a_{K}\in\C^n$ associated with $\ga$'s and given up
to the right action of a classical group $G$ corresponding to
$\g$. The last two items ($\ga$'s and $\a$'s) are joined under the
name {\it Tyurin data}, because of the following
\begin{theorem}[A.N.Tyurin]
Let $g=genus\ \Sigma$, $n\in\Z_+$. Then there is a 1-to-1
correspondence between the following data:
\newline
 1) \ points $\ga_1,\ldots,\ga_{ng}$ of \ $\Sigma$ ;
 \newline
 2) \ $\a_1,\ldots,\a_{ng}\in\C P^{n-1}$
 \newline
and the equivalence classes of the equipped semi-stable
holomorphic rank $n$ vector bundles on $\Sigma$
\end{theorem}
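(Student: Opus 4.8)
The plan is to establish the bijection by constructing explicit maps in both directions and proving they are mutually inverse, the principal tool being the theory of holomorphic sections together with Riemann--Roch. First I would set up the forward map, from bundles to Tyurin data. For a semi-stable holomorphic vector bundle $E$ of rank $n$ and degree $ng$ on $\Sigma$, Riemann--Roch gives $\chi(E)=h^0(\Sigma,E)-h^1(\Sigma,E)=\deg E+n(1-g)=n$. For a generic (equipped) such bundle $H^1(\Sigma,E)=0$, so $E$ carries exactly $n$ linearly independent global sections $s_1,\dots,s_n$; the choice of such a frame is precisely the \emph{equipment}. Writing these sections in a local trivialization as the columns of an $n\times n$ matrix $\Psi$, the local functions $\det\Psi$ patch to a global holomorphic section of $\det E$, whose zero divisor has degree $\deg\det E=ng$. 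Its support (generically $ng$ distinct points) gives the points $\gamma_1,\dots,\gamma_{ng}$, and at each $\gamma_s$ the frame drops rank by one, so the one-dimensional kernel of $\Psi(\gamma_s)$ determines a vector $\alpha_s\in\C P^{n-1}$.

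Conversely, from the data $(\gamma_s,\alpha_s)$ I would reconstruct $E$ as the sheaf whose local sections are the vector-valued meromorphic functions $\psi$, holomorphic away from the $\gamma_s$ and having at worst a simple pole at each $\gamma_s$ of the form $\psi(z)=\alpha_s\,\beta_s/z+O(1)$ for some covector $\beta_s$, where $z$ is a local coordinate centred at $\gamma_s$. This prescription defines a locally free sheaf of rank $n$, that is, a holomorphic vector bundle, realized as an explicit Hecke-type modification of the trivial bundle $\O^n$ along the divisor $\sum_s\gamma_s$; its degree is $ng$ by construction, and the prescribed pole directions $\alpha_s$ encode the gluing.

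The core of the argument is then to verify that these two assignments are mutually inverse and descend to the stated equivalence classes. Since $\alpha_s$ enters only projectively and the covectors $\beta_s$ are free, one checks that the sections of the reconstructed bundle recover exactly the prescribed degeneration pattern, and that an isomorphism of equipped bundles corresponds to the $GL_n$-action on the frame, which fixes the $\gamma_s$ and acts projectively on the $\alpha_s$---whence the role of $\C P^{n-1}$. A parameter count confirms the bijection: the data carry $ng+ng(n-1)=n^2g$ complex parameters, matching the dimension of the space of equipped semi-stable bundles, while the residual $n^2-1=\dim PGL_n$ separating this from the moduli space $\mathcal M$ of bare bundles is exactly the equipment.

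The main obstacle I anticipate is the non-generic locus. The clean picture above relies on $h^1(E)=0$, on simple zeros of $\det\Psi$, and on the $\gamma_s$ being distinct; establishing the bijection in full generality requires controlling coincident points (where a single $\alpha_s$ must be replaced by jet or flag data), higher-order degeneracies of the frame, and proving that every reconstructed sheaf is genuinely semi-stable. Handling these boundary cases---and showing that semi-stability is both what guarantees the existence of the frame and what is preserved under the reconstruction---is where the real work lies; over the generic stratum the correspondence reduces to linear algebra along the zero divisor.
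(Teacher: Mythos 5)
The paper itself gives no proof of this statement: it is quoted as A.N.~Tyurin's classical classification theorem, with a citation to Tyurin's paper \cite{Tyvb}, and serves only to motivate the name ``Tyurin data.'' So there is no in-paper argument to compare yours against; the comparison has to be with the standard proof in the literature (Tyurin's original one, reproduced in essence in Krichever's paper \cite{Klax} on which this article builds). Your sketch follows exactly that standard route: Riemann--Roch for a rank-$n$, degree-$ng$ semi-stable bundle gives $\chi(E)=n$, the equipment is a choice of $n$ sections, $\det\Psi$ cuts out the degree-$ng$ divisor $\sum_s\gamma_s$, and the kernel of $\Psi(\gamma_s)$ gives $\alpha_s\in\C P^{n-1}$; conversely, the bundle is recovered as a Hecke-type modification of $\O^n$ whose local sections are allowed a simple pole at each $\gamma_s$ with residue along $\alpha_s$. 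This is the correct construction in both directions, and your dimension count ($ng+ng(n-1)=n^2g=\dim\mathcal M+\dim PGL_n$) is the right consistency check.

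Two caveats. First, a notational slip: for a \emph{vector-valued} section $\psi$ the residue at $\gamma_s$ must be a scalar multiple of $\alpha_s$ (exactly as in the module $\mathcal F$ the paper introduces later, where $\psi(z)=\nu\alpha/z+\psi_0+\cdots$); writing the residue as $\alpha_s\beta_s$ with $\beta_s$ a covector produces an $n\times n$ matrix, and the covectors $\beta$ belong to the matrix-valued (Lax-operator) picture, not to the reconstruction of the bundle from its Tyurin data. Second, and more substantively, what you defer to your final paragraph --- coincident $\gamma_s$, higher-order degeneration of the frame, vanishing of $h^1$, and the verification that the reconstructed sheaf is semi-stable --- is precisely the content that makes Tyurin's statement a theorem rather than a generic-stratum observation. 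As you acknowledge, your argument is complete only over the open dense stratum where everything is in general position. That reading is adequate for how the statement is used in this paper (only generic Tyurin data ever enter the Lax formalism), but as a proof of the quoted theorem your proposal is a faithful outline rather than a complete argument.
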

\noindent where equipment means fixing $n$ holomorphic sections
linear independent except at $ng$ points.

\subsection{Lax operators on Riemann surfaces}

Let $\{\a\}=\{ \a_1,\ldots, \a_{K} \}$,   $\{\ga\}=\{\ga_1,\ldots,
\ga_{K}\}$, $\{\k\}=\{ \k_i\in\C |i=1,\ldots,K \}$. Below, we will
avoid the indices using $\a$ instead $\a_i$ etc. Consider a set
$\{\b\}=\{ \b_1,\ldots, \b_{K} \}$ dual to $\{\a\}$ with respect
to the symplectic structure to appear below.

Consider a function $L(P,\{\a\},\{\b\},\{\ga\},\{\k\})$
($P\in\Sigma$) obeying certain requirements which will follow
immediately. In the local coordinate $z$ on $\Sigma$ we refer to
this function as to $L(z)$ omitting the indication to other
arguments. We require that $L$ is meromorphic as a function of
$P$, has arbitrary poles at $P_i$'s, simple or double poles at
$\ga$'s (depending on $\g$), is holomorphic elsewhere, and at
every $\ga$ is of the form
\[
 L(z)=\frac{L_{-2}}{(z-z_\ga)^2}+\frac{L_{-1}}{(z-z_\ga)}+L_{0}
 +L_1(z-z_\ga)+O((z-z_\ga)^2)
\]
where $z$ is a local coordinate at $\ga$,  $z_\ga=z(\ga)$ and the
following relations hold:
\begin{equation}\label{E:rel1}
L_{-2}=\nu\a\a^t\s , \ \  L_{-1}=(\a\b^t+\eps\b\a^t)\s, \ \
\b^t\s\a=0, \ \ L_0\a=\k\a
\end{equation}
where $\a,\b\in \C^n$ ($\a$ is associated with $\ga$, $\b$ is
arbitrary), $\nu\in\C$, $\s$ is a $n\times n$ matrix. $L$ is
called a {\it  Lax operator with a spectral parameter on the
Riemann surface $\Sigma$}. The $\nu$, $\eps$, $\s$ in \refE{rel1}
depend on $\g$ as follows:
\begin{equation}\label{E:llaeps}
  \begin{aligned}
     \nu\equiv 0,\ &\eps=0,\ \ \  \s=id\ \ \text{for}\ \g=\gl(n),\sln(n),\\
     \nu\equiv 0,\ &\eps=-1,\ \s=id\ \ \text{for}\ \g=\so(n), \\
                  &\eps=1\ \ \ \ \ \ \ \ \ \ \ \ \ \ \
                  \text{for}\ \g=\spn(2n),
  \end{aligned}
\end{equation}
and $\s$ is a matrix of the symplectic form for $\g=\spn(2n)$.

In addition we assume that
\begin{equation}\label{E:add1}
\a^t\a=0\ \ \text{for}\ \ \g=\so(n)
\end{equation}
and
\begin{equation}\label{E:add2} \a^t\s L_1\a=0\ \ \text{for}\ \
\g=\spn(2n).
\end{equation}


\subsection{Lax operator algebras}

\begin{theorem}[Lie algebra structure, \cite{KSlax}]  For
fixed Tyurin data the space of Lax operators is closed with
respect to the point-wise commutator $[L,L'](P)=[L(P),L'(P)]$
$(P\in\Sigma)$ (in the case $\g=\gl(n)$ also with respect to the
point-wise multiplication).
\end{theorem}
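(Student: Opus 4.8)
The plan is to reduce the statement to a local computation at each Tyurin point $\ga$, since closure is automatic everywhere else. First note that $[L,L'](P)=[L(P),L'(P)]$ lies in $\g$ for every $P$ because $\g$ is a matrix Lie algebra closed under the bracket; for $\g=\sln(n)$ one adds that $\tr[L,L']=0$, so the bracket stays trace-free, while for $\g=\gl(n)$ the pointwise product $L(P)L'(P)$ again lies in $\gl(n)$. Away from the marked points $\PN$ and the Tyurin points $\ga_1,\dots,\ga_K$ both $L$ and $L'$ are holomorphic, hence so are $[L,L']$ and $LL'$; at the $P_i$ arbitrary poles are allowed, so nothing has to be checked there. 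Thus the only thing to verify is that at each $\ga$ the bracket (and, when $\g=\gl(n)$, the product) again has the singular structure prescribed by \refE{rel1}, \refE{add1} and \refE{add2}, \emph{with the same vector} $\a$ and some new data $\tilde\nu,\tilde\b,\tilde\k$.

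Fixing $\ga$ and writing $w=z-z_\ga$, I would expand $L=\sum_{j\ge-2}L_j w^j$ and $L'=\sum_{j\ge-2}L'_j w^j$, where $L_{-2},L_{-1},L_0$ obey \refE{rel1} with a common $\a$, and then read off the Laurent coefficients of the bracket order by order. The whole computation is driven by a few elementary identities: $\a^t\s\a=0$ (skew-symmetry of $\s$), the constraint $\b^t\s\a=0$ of \refE{rel1} and its transpose $\a^t\s\b=0$, the relation $\a^t\a=0$ of \refE{add1} when $\g=\so(n)$, and the consequence $\a^t\s L_0=-\k\,\a^t\s$ of $L_0\in\g$ together with $L_0\a=\k\a$ (and the analogue for $L'$). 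For $\g=\gl(n),\sln(n),\so(n)$ one has $\nu\equiv0$, so only simple poles occur and the analysis is short: the $w^{-2}$ coefficient of the bracket is $[L_{-1},L'_{-1}]$, which vanishes because, by $\b^t\a=0$ (and $\a^t\a=0$ for $\g=\so(n)$), $L_{-1}L'_{-1}$ and $L'_{-1}L_{-1}$ reduce to equal scalar multiples of $\a\a^t$. The residue then collapses, after substituting $L_0\a=\k\a$ and $L'_0\a=\k'\a$, to the prescribed form $(\a\tilde\b^t+\eps\tilde\b\a^t)\s$ with $\tilde\b^t\s\a=0$, and applying the $w^0$ coefficient of the bracket to $\a$ yields a scalar multiple of $\a$, which is the relation $L_0\a=\k\a$ for the bracket. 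The $\gl(n)$ product is handled identically, its double pole vanishing by $L_{-1}L'_{-1}=(\b^t\a)\,\a(\b')^t=0$.

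The main obstacle is $\g=\spn(2n)$, where genuine double poles are present and $[L,L']$ carries, a priori, a pole of order four. Here I would show in turn that the coefficients of $w^{-4}$ and $w^{-3}$ vanish identically — the order-four term $[L_{-2},L'_{-2}]$ dies because $\a^t\s\a=0$, and the order-three terms die using in addition $\b^t\s\a=0=\a^t\s\b'$ — and that the $w^{-2}$ coefficient collapses to $\tilde\nu\,\a\a^t\s$, a short computation with $\a^t\s L'_0=-\k'\a^t\s$ and $L_{-1}L'_{-1}=(\b^t\s\b')\,\a\a^t\s$ giving $\tilde\nu=2(\nu'\k-\nu\k'+\b^t\s\b')$. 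The residue is checked to have the shape $(\a\tilde\b^t+\tilde\b\a^t)\s$ with $\tilde\b^t\s\a=0$ by the same manipulations as before. The truly delicate point, which I expect to demand the most care, is the supplementary constraint \refE{add2} for the bracket, namely $\a^t\s[L,L']_1\a=0$: this forces the expansion out to the $w^1$ coefficient, and after discarding the terms killed by $\a^t\s\a=0$ and $\b^t\s\a=0$ the quantity $\a^t\s[L,L']_1\a$ reduces to $2\k'\,\a^t\s L_1\a-2\k\,\a^t\s L'_1\a$, which vanishes precisely by \refE{add2} applied to $L$ and to $L'$. Assembling the three cases shows that $[L,L']$ (and $LL'$ when $\g=\gl(n)$) again satisfies \refE{rel1}, \refE{add1} and \refE{add2} with the same Tyurin data, which is the claim.
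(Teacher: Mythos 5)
Your proof is correct, and it is essentially \emph{the} proof: the paper itself states this theorem without argument, importing it from \cite{KSlax} (the section is explicitly a survey), and the local Laurent-expansion verification at the $\ga$-points that you carry out is the same strategy used there. I checked the two computations you flag as the delicate ones: the double-pole coefficient for $\g=\spn(2n)$ does collapse to $\tilde\nu\,\a\a^t\s$ with $\tilde\nu=2(\nu'\k-\nu\k'+\b^t\s\b')$, and $\a^t\s[L,L']_1\a$ does reduce (after killing terms via $\a^t\s\a=0$, $\b^t\s\a=0$, and $\a^t\s L_0=-\k\,\a^t\s$) to $2\k'\,\a^t\s L_1\a-2\k\,\a^t\s L_1'\a$, which vanishes by \refE{add2} for $L$ and $L'$ — so both are right.
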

It is called {\it Lax operator algebra} and denoted by $\gb$.
\begin{theorem}[almost graded structure, \cite{KSlax}]\label{T:almgrad} There exist
such finite-dimensional subspaces $\g_m\subset \g$ that
\[
(1)\ \gb=\bigoplus\limits_{m=-\infty}^\infty \g_m ; \ \ \ (2)
\dim\,\g_m=\dim\,\g ; \ \ \  (3)\
[\g_k,\g_l]\subseteq\bigoplus\limits_{m=k+l}^{k+l+ g}\g_m .
\]
\end{theorem}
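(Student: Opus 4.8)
The plan is to build the homogeneous pieces $\g_m$ by the Krichever--Novikov recipe, prescribing orders at the marked points while keeping the Tyurin singularity structure \refE{rel1} frozen at every $\ga$, and then to deduce (1)--(3) from Riemann--Roch on $\Sigma$. Concretely, I would fix a splitting of the marked points into a nonempty ``in'' part and a nonempty ``out'' part (taking, say, $\Pif$ as the single in-point and the support of $D$ as the out-side), and assign to each $m\in\Z$ the condition that a Lax operator vanish to order $\sim m$ at $\Pif$ and satisfy complementary pole bounds along $D$, all the while carrying at each $\ga$ the prescribed singularities dictated by \refE{rel1}, \refE{llaeps}, \refE{add1}, \refE{add2}. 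I then define $\g_m$ to be this finite-dimensional space; the almost-grading is the assertion that these pieces add up to $\gb$ with the controlled bracket behaviour (3).

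The heart of the matter is the dimension count (2). Here I would realize $\g_m$ as the global sections of a sheaf of $\g$-valued meromorphic functions on $\Sigma$ cut out by the Tyurin constraints, and compute its Euler characteristic by Riemann--Roch. The decisive observation is a local balance at each $\ga$: admitting a pole there \emph{a priori} enlarges the residue, but the rank-one form $L_{-1}=(\a\b^t+\eps\b\a^t)\s$ together with the eigenvector condition $L_0\a=\k\a$ --- and, for $\so(n)$ and $\spn(2n)$, the extra conditions \refE{add1} and \refE{add2} --- removes exactly the compensating number of parameters. Carrying this out type by type for $\gl(n),\sln(n),\so(n),\spn(2n)$, and using that generic Tyurin data are in general position so that the associated $\H^1$ term vanishes for the chosen normalisation, one obtains $\dim\g_m=\dim\g$ for every $m$, which is precisely the statement that the Tyurin poles are exactly counterbalanced by the Tyurin constraints.

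With (2) in hand, (1) follows by the usual filtration argument: distinct $\g_m$ are separated by the leading order at $\Pif$, so the sum is direct, and any Lax operator is reduced to $0$ by successively subtracting its homogeneous leading term, a terminating process by the order bounds. For (3) I take $L\in\g_k$ and $L'\in\g_l$; by the Lie-algebra-structure theorem $[L,L']$ is again a Lax operator, and since orders add at $\Pif$ its expansion in the homogeneous basis starts at degree $k+l$, giving the lower bound. The upper bound $k+l+g$ is the standard genus correction: prescribing order data on a surface of genus $g$ does not pin a meromorphic object to a single degree but spreads it over $g+1$ consecutive degrees, exactly the $\H^1$ ambiguity in Riemann--Roch, whence $[\g_k,\g_l]\subseteq\bigoplus_{m=k+l}^{k+l+g}\g_m$.

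I expect the dimension count (2) to be the main obstacle. One must prove that the constraints \refE{rel1}--\refE{add2} reduce the section space by precisely the right amount, uniformly in $m$ and separately for each classical type, which forces a careful local analysis at the $\ga$'s and a genericity hypothesis on the Tyurin data to annihilate the higher cohomology. Once the homogeneous dimensions are shown to be constant and $\H^1$ is controlled, properties (1) and (3) are comparatively routine consequences of Riemann--Roch.
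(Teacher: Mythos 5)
You should first know that this paper contains no proof of \refT{almgrad} at all: the theorem is quoted, with attribution, from \cite{KSlax}, and the whole of \refS{phase} is declared to be a survey. So your proposal can only be measured against the argument of that reference, and measured that way your route is the same one: homogeneous subspaces cut out by Krichever--Novikov order prescriptions at the marked points with the Tyurin structure \refE{rel1}--\refE{add2} held fixed at the $\ga$'s; Riemann--Roch plus a local balance at each $\ga$ for claim (2); separation by orders for the directness in (1); and the genus-$g$ shift at the out-points producing the spread in (3). Your choice of a single in-point $\Pif$ is also what makes (2) read $\dim\g_m=\dim\g$ rather than a multiple of it, so the architecture is right.

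The gap is in the step you yourself call the crux: the balance count is asserted in a form that does not survive execution for $\g=\so(n)$. For $\gl(n)$ the naive bookkeeping does work: a simple pole contributes $n^2$ parameters, the residue constraint ($L_{-1}=\a\b^t$, $\b^t\a=0$) imposes $n^2-(n-1)$ conditions, and $L_0\a=\k\a$ imposes $n-1$, totalling $n^2$. Repeating this verbatim for $\so(n)$ gives: pole contribution $n(n-1)/2$; allowed residues $\{\a\b^t-\b\a^t:\b^t\a=0\}$ of dimension $n-2$ (here \refE{add1} puts $\a$ in the domain and in the kernel of $\b\mapsto\a\b^t-\b\a^t$), hence $n(n-1)/2-(n-2)$ conditions; plus ``$n-1$'' conditions from $L_0\a=\k\a$ --- total $n(n-1)/2+1$, an overshoot of one per Tyurin point, which would force $\dim\g_m<\dim\g$. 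The balance is rescued only by the degeneracy that for skew-symmetric $L_0$ the vector $L_0\a$ lies automatically in $\a^\perp$, and $\a\in\a^\perp$ by \refE{add1}, so the eigenvector condition has codimension $n-2$, not $n-1$. For $\spn(2n)$ the pole is double, the target is $2\dim\g$ conditions, and it is precisely the extra condition \refE{add2} on $L_1$ that supplies the one condition otherwise missing; the role of \refE{add1} for $\so(n)$ is of the opposite nature (it creates a redundancy rather than adding a constraint), so the phrase ``the extra conditions remove the compensating number of parameters'' is not a uniform mechanism but a case-by-case accident that has to be verified type by type. Your genericity remark is essentially correct as stated --- the ambient $\H^1$ vanishes for degree reasons, and what general position of the data must deliver is the surjectivity of the constraint map, i.e.\ the vanishing of $\H^1$ of the constrained sheaf. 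With these degeneracies made explicit, the remaining steps (1) and (3) are routine exactly as you describe, and your outline becomes the proof of \cite{KSlax}.
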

\begin{theorem} If $\g$ is
simple then $\gb$ has only one almost graded central extension, up
to equivalence \cite{SSlax}. It is given by a cocycle
$\gamma(L,L')=-\res_{P_\infty}\tr(LdL'-[L,L']\theta)$ where
$\theta$ is a certain 1-form \cite{KSlax}.
\end{theorem}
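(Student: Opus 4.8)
The plan is to identify almost graded central extensions of $\gb$ with \emph{local} $2$-cocycles, to verify that the displayed $\gamma$ is such a cocycle, and then to show that the space of local cohomology classes is one-dimensional. Recall that central extensions of $\gb$ are classified up to equivalence by $H^2(\gb,\C)$, i.e.\ by antisymmetric bilinear maps $\gamma\colon\gb\times\gb\to\C$ satisfying the cocycle identity
\[
\gamma([L,L'],L'')+\gamma([L',L''],L)+\gamma([L'',L],L')=0,
\]
modulo coboundaries $\gamma(L,L')=\varphi([L,L'])$ with $\varphi\colon\gb\to\C$ linear. The extension is almost graded exactly when $\gamma$ is local, that is, by \refT{almgrad} there is a constant $M$ with $\gamma(\g_k,\g_l)=0$ whenever $|k+l|>M$. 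So the task is to compute the space $H^2_{\mathrm{loc}}(\gb,\C)$ of local classes and to check that it is spanned by the class of the stated cocycle.

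First I would verify that $\gamma(L,L')=-\res_{P_\infty}\tr(L\,dL'-[L,L']\theta)$ is a well defined local cocycle. Antisymmetry and the cocycle identity will follow from the residue theorem together with the invariance $\tr([L,L']L'')=\tr(L[L',L''])$ and the vanishing of the total sum of residues of a meromorphic $1$-form; the role of the correction term $[L,L']\theta$ is to cancel the contributions at the Tyurin points $\ga$ forced by \refE{rel1}, so that the only surviving residue is the one at $P_\infty$ and the value is insensitive to the admissible local representatives. Here $\theta$ should be taken to be a connection-type meromorphic $1$-form whose singularities are confined to the marked points; a degree count in the almost grading then yields a bound of the form $|k+l|\le M$ with $M$ depending on $g$, so $\gamma$ is local.

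For uniqueness I would argue degree by degree. Writing a local cocycle as $\gamma=\sum_m\gamma_{(m)}$ with $\gamma_{(m)}(\g_k,\g_l)=0$ unless $k+l=m$, locality makes this a finite sum, and the cocycle identity splits into homogeneous components on triples of homogeneous elements. Since $\g$ is simple one has $\gb=[\gb,\gb]$, and invariance then forces the $\g$-dependence of each component to factor through an invariant symmetric bilinear form on $\g$; such a form is unique up to scale (a multiple of the trace form), which pins down the internal part of the cocycle. What remains is a geometric factor on the commutative algebra of functions and $1$-forms attached to $\Sigma$ subject to the Tyurin constraints, and the standard Krichever--Novikov analysis of local cochains shows that, modulo coboundaries, this factor is one-dimensional and is represented by the residue pairing at $P_\infty$. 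Combining the two, one obtains $\dim H^2_{\mathrm{loc}}(\gb,\C)=1$, with the displayed $\gamma$ a nonzero representative.

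The main obstacle is precisely the Tyurin-point structure, which prevents a direct appeal to the known computation of $H^2$ for ordinary current (Kac--Moody) algebras $\g\otimes\A$. The hard part will be to show that the constraints \refE{rel1}, \refE{add1}, \refE{add2} at the points $\ga$ neither obstruct the reduction to an invariant form on $\g$ nor create additional local cohomology; equivalently, that every local cocycle can be gauged by a suitable coboundary to agree with a multiple of $\gamma$ on the singular part at the $\ga$'s. Establishing this compatibility---that the correction term built from $\theta$ is forced and that no independent class is hidden in the behaviour at the Tyurin points---is the technical heart of the argument, and is where the hypothesis that $\g$ be simple is genuinely used.
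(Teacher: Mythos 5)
The paper itself offers no proof to compare with: this theorem sits in the survey \refS{phase} and is quoted from \cite{KSlax,SSlax}, so the only meaningful comparison is with the proof in the cited work \cite{SSlax}. Your overall strategy --- identifying almost graded central extensions with (classes of) local cocycles, checking that the displayed $\gamma$ is a local cocycle with $\theta$ cancelling the poles of $\tr(L\,dL')$ at the Tyurin points, and then proving that the space of local cohomology classes is one-dimensional --- is indeed the route taken there, up to two caveats of precision: an extension is almost graded iff its cocycle is \emph{cohomologous} to a local one (not literally local), and ``only one extension up to equivalence'' really means the local classes form a one-dimensional space, since rescaling a cocycle changes its class but not the isomorphism type of the extension.

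However, your uniqueness argument has a genuine gap, and it is exactly the step you defer. You propose to use simplicity of $\g$ to make the internal part of each homogeneous component factor through an invariant symmetric form on $\g$, leaving ``a geometric factor on the commutative algebra of functions'' to be disposed of by ``the standard Krichever--Novikov analysis of local cochains.'' That factorization presupposes that $\gb$ splits as a tensor product of a finite-dimensional Lie algebra with a function algebra; but the defining feature of Lax operator algebras is that they do \emph{not} so split --- the constraints \refE{rel1}, \refE{add1}, \refE{add2} tie the matrix part of an element to its local behaviour at the $\ga$'s. The present paper makes this very point in \refSS{aff_KN} as the obstruction to running the Sugawara construction directly on $\gb$, and it equally obstructs invoking the classical computation of $H^2(\g\otimes\A)$, even component-wise in the almost grading. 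In \cite{SSlax} this is precisely where the work lies: a degree-by-degree analysis relative to the grading of \refT{almgrad}, with an explicit gauging by coboundaries showing that the Tyurin-point behaviour supports no independent local class and forces the correction term $[L,L']\theta$. Your sketch names this as ``the technical heart'' but supplies no argument for it, so as it stands the proposal gives (modulo verifying the properties of $\theta$) the existence of the cocycle but not the uniqueness assertion, which is the actual content of the theorem.
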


\subsection{M-operators}

$M=M(z,\{\a\},\{\b\},\{\ga\},\{\k\})$ is defined by the same
constrains as $L$, excluding $\b^t\s\a=0$ and $L_0\a=\k\a$, namely
\[
 M=\frac{M_{-2}}{(z-z_\ga)^2}+\frac{M_{-1}}{z-z_\ga}+M_{0}+M_1(z-z_\ga)
 +O((z-z_\ga)^2)
\]
where
\begin{equation}\label{E:M-con}
 M_{-2}=\l\a\a^t\s , \ \  M_{-1}=(\a\mu^t+\eps\mu\a^t)\s
\end{equation}
$M$ also takes values in $\g$, $\l\in\C$, $\mu\in\C^n$.

\subsection{Lax equations}

For variative Tyurin data, the collection of equations on
$\{\a\}$, $ \{\b\}$, $\{\ga\}$, $\{\k\}$ and main parts of $L$ at
$\{P_i|i=1,\ldots,N\}$ equivalent to the relation
\begin{equation}\label{E:LaxEq}
                \dot L=[L,M]
\end{equation}
is called a {\it Lax equation}.

{\it Motion equations of Tyurin data} assigned to a point $\ga$:
\[
   \dot z_\ga=-\mu^t\s\a, \ \ \dot\a=-M_0\a+k_a.
\]
Besides, there are motion equations of main parts of the function
$L$ at $P_i$'s.

Let $D=\sum m_iP_i$ ($i=1,\ldots, N,\infty$) be a divisor such
that $\text{supp}\, D\cap\{\ga\}=\varnothing$, $\L^D:=\{L|(L)+D\ge
0\ \text{outside}\ \ga\text{'s}\}$. Stress again that the elements
of $\L^D$ have a two-fold interpretation: as Lax operators and as
sets of Tyurin data and main parts of $L$-operators.

Under a certain (effective) condition \cite{KSlax,SSlax} the Lax
equation defines a flow on $\L^D$.


\subsection{Examples}

1) $g=0$, $\a=0$  (i.e. $\Sigma=\C P^1$, the bundle is trivial),
$P_1=0$, $P_2=\infty$. Then $\gb=\g\otimes\C[z,z^{-1}]$ is a loop
algebra, \refE{LaxEq} is a conventional Lax equation with
rational spectral parameter:
\[L_t=[L,M], \quad L,M\in \g\otimes\C[\l^{-1},\l), \quad \l\in{\mathcal
D}^1.
\]
The Lax equations of this type are considered by I.Gelfand,
L.Dikii, I.Dorfman, A.Reyman, M.Semenov-Tian-Shanskii, V.Drinfeld,
V.Soko\-lov, V.Kac, P. van Moerbeke. Many known integrable cases
of motion and hydrodynamics of a solid body belong to this class.

2) Elliptic curves: Calogero-Moser systems \cite{Sh0910_Hamilt}.

\vskip5pt 3) Arbitrary genus: Hitchin systems

\subsection{Hierarchy of commuting flows}

\begin{theorem}[\cite{Klax,Sh_lopa,Sh0910_Hamilt}]\label{T:hierarch}
Given a generic $L$ and effective divisor $D=\sum m_iP_i$
($i=1,\ldots, N,\infty$, there is a family of $M$-operators
$M_a=M_a(L)$ ($a=(P_i,n,m), n>0,\ m>-m_i$) uniquely defined up to
normalization, such that outside the $\ga$-points $M_a$ has pole
at the point $P_i$ only, and in the neighborhood of $P_i$
\[
  M_a(w_i)=w_i^{-m}L^n(w_i)+O(1),
\]

The equations
 \begin{equation}\label{E:hi}
   \partial_aL=[L,M_a],\ \partial_a=\partial/\partial t_a
 \end{equation}
 define a family of commuting flows on an open set of $\L^D$.
\end{theorem}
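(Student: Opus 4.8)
The plan is to establish two things for the family $M_a(L)$: existence and uniqueness of the operators with the prescribed singular behaviour, and then commutativity of the associated flows. For the first part I would exploit the almost-graded structure from Theorem~\ref{T:almgrad}. The requirement that $M_a$ be an $M$-operator (so of the form constrained by \refE{M-con}) holomorphic outside the $\ga$'s except for a pole at the single point $P_i$, together with the principal-part condition $M_a(w_i)=w_i^{-m}L^n(w_i)+O(1)$, is a Riemann--Roch type problem. First I would compute the divisor class that $w_i^{-m}L^n$ represents near $P_i$ and check that the space of $M$-operators with this prescribed principal part and no other poles outside the Tyurin points is one-dimensional (the ``up to normalization'' clause), invoking a dimension count analogous to the one underlying the almost-graded decomposition. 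Here the effectivity of $D$ and the genericity of $L$ are used to guarantee that the obstruction spaces vanish and the required $M_a$ exists uniquely.

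For the commutativity of the flows, the core computation is to show that the vector fields $\partial_a$ defined by \refE{hi} satisfy $[\partial_a,\partial_b]L=0$ on the common domain. Writing the two flows out, one has
\[
   \partial_a\partial_b L - \partial_b\partial_a L
   = [\partial_a L, M_b] + [L,\partial_a M_b]
     - [\partial_b L, M_a] - [L,\partial_b M_a].
\]
Substituting $\partial_a L=[L,M_a]$ and $\partial_b L=[L,M_b]$ and applying the Jacobi identity, the difference of the double commutators collapses, and one is left with
\[
   [\partial_a,\partial_b]L
   = \bigl[L,\ \partial_a M_b-\partial_b M_a-[M_a,M_b]\bigr].
\]
Thus commutativity reduces to the \emph{zero-curvature} condition
\[
   \partial_a M_b-\partial_b M_a=[M_a,M_b].
\]
I would verify this by the standard argument: the combination $\partial_a M_b-\partial_b M_a-[M_a,M_b]$ is itself an $M$-operator, and I would show it is in fact an $L$-operator (its principal parts at the $\ga$'s and its pole structure at the $P_i$'s match those permitted for elements of $\gb$), while its behaviour at the marked points forces it to be regular. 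Using the uniqueness half of the existence statement—there is no nonzero element of the relevant space with the singularities that $\partial_a M_b-\partial_b M_a-[M_a,M_b]$ is forced to exhibit—one concludes it vanishes identically.

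The main obstacle I expect is the bookkeeping of principal parts at the Tyurin points $\ga$. The delicate point is that $M_a$ is built from $L^n$, and one must check that the constrained form \refE{rel1}, \refE{M-con} is preserved under taking powers, commutators, and the local expansion in $w_i$; in particular that $\partial_a M_b-\partial_b M_a-[M_a,M_b]$ retains the structure $M_{-2}=\l\a\a^t\s$, $M_{-1}=(\a\mu^t+\eps\mu\a^t)\s$ with the \emph{same} $\a$ as dictated by the Tyurin data, so that the uniqueness argument applies. This stability of the Lax-operator-algebra constraints under the algebraic operations is exactly what Theorem~\ref{T:almgrad} and the Lie-algebra closedness result are designed to provide, so the proof hinges on extracting from those statements the precise compatibility of principal parts needed to run the zero-curvature argument to its conclusion.
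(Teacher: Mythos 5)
First, a point of comparison: this paper does not prove Theorem~\ref{T:hierarch} at all --- \refS{phase} is explicitly a survey, and the theorem is imported from \cite{Klax,Sh_lopa,Sh0910_Hamilt} --- so your proposal can only be measured against the arguments in those references. Your overall strategy does coincide with theirs: a Riemann--Roch type dimension count for existence and uniqueness of $M_a$ (whose crucial structural input you correctly isolate, namely that the constraints \refE{rel1} are stable under taking powers, so that $L^n$, and hence $w_i^{-m}L^n$, satisfies the weaker $M$-operator constraints \refE{M-con} with the \emph{same} $\a$'s), followed by reduction of commutativity to a zero-curvature identity established by bookkeeping of singular parts. The route is the standard one; the problems are in the execution, and there are two.

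The first is a sign error that is not cosmetic. With the paper's convention $\partial_aL=[L,M_a]$, the Jacobi identity gives
\[
[\partial_a,\partial_b]L=\bigl[L,\ \partial_aM_b-\partial_bM_a+[M_a,M_b]\bigr],
\]
with $+[M_a,M_b]$, not $-[M_a,M_b]$ as you wrote. This matters because the singular parts at $P_i$ cancel only in the combination with the plus sign: writing $M_a=w_i^{-m}L^n+R_a$ and $M_b=w_i^{-m'}L^{n'}+R_b$ with $R_a,R_b=O(1)$, one finds $\partial_aM_b-\partial_bM_a=w_i^{-m'}[L^{n'},R_a]-w_i^{-m}[L^n,R_b]+O(1)$, while $[M_a,M_b]=w_i^{-m}[L^n,R_b]-w_i^{-m'}[L^{n'},R_a]+O(1)$; the sum is $O(1)$, whereas your difference carries twice this singular part and is not regular at $P_i$. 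The same sign-sensitive cancellation occurs in the double-pole terms at the $\ga$'s (the contribution of the moving pole $z_\ga(t)$ under $\partial_a$ against the commutator term), so with your sign the expression is not even an $M$-operator, and no uniqueness argument can force it to vanish.

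The second, more substantial, gap is at the final step. Even with the correct sign, what the cancellations yield is that $F_{ab}=\partial_aM_b-\partial_bM_a+[M_a,M_b]$ is an $M$-operator holomorphic outside the $\ga$'s and regular at every $P_i$. The space of such operators is exactly the normalization ambiguity in the statement of the theorem, and it is nonzero (for $\g=\gl(n)$ it contains the constants $c\cdot id$, and for generic Tyurin data it has positive dimension); that is precisely why the theorem says ``uniquely defined up to normalization''. So your claim that ``there is no nonzero element of the relevant space with the singularities that $F_{ab}$ is forced to exhibit'' is false, and uniqueness alone does not give $F_{ab}=0$. One must either fix a normalization of the $M_a$'s and check that $F_{ab}$ inherits the normalization condition (this is how the argument is closed in \cite{Klax}), or show directly that the residual element of the normalization space commutes with $L$ and generates the trivial flow on the remaining phase variables; without one of these the proof stops one step short of commutativity. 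Your parenthetical remark that the solution space with prescribed principal part is ``one-dimensional'' misreads the same point: it is an affine space modeled on this, generally higher-dimensional, normalization space.
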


\subsection{Krichever-Phong symplectic structure}

We define an external 2-form on $\L^D$. For $L\in\L^D$ let $\Psi$
be a matrix-valued function formed by the eigenvectors of $L$:
$\Psi L=K\Psi$ ($K$ --- diagonal).
\[\Omega:=\tr(\d\Psi\wedge\d L\cdot\Psi^{-1}-\d K\wedge\d\Psi\cdot\Psi^{-1})
=2\d\tr(\d\Psi\cdot\Psi^{-1}K)
\]
where $\d\Psi$ is the differential of $\Psi$ in $\a,\b,\ldots$.

Let $dz$ be a holomorphic 1-form on $\Sigma$ and
\[\w:=-\frac{1}{2}\left(\sum\res_{\ga_s}\Omega dz+\sum\res_{P_i}\Omega dz\right) .\]

\begin{theorem}[\cite{Klax,Sh0910_Hamilt}] $\w$ is  a symplectic form
on a certain closed invariant submanifold $\P^D\subset\L^D$.
\end{theorem}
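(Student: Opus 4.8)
The plan is to establish two facts about the 2-form $\w$: that it is closed ($d\w=0$), and that it is nondegenerate after restriction to a suitable submanifold $\P^D$. The closedness should come almost for free from the structure of $\Omega$. Since $\Omega=2\d\tr(\d\Psi\cdot\Psi^{-1}K)$ is manifestly an exact form in the phase variables $\a,\b,\ldots$ (it is $\d$ of the 1-form $\theta_0:=2\tr(\d\Psi\cdot\Psi^{-1}K)$), applying $\d$ once more gives $\d\Omega=0$. Taking residues at the $\ga_s$ and $P_i$ and summing is a linear operation that commutes with the exterior differential $\d$ in the phase variables, so $\d\w=-\tfrac12\bigl(\sum\res_{\ga_s}\d\Omega\, dz+\sum\res_{P_i}\d\Omega\, dz\bigr)=0$. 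The first step, then, is simply to verify that residue-extraction commutes with $\d$ and invoke exactness of $\Omega$.

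The more substantial work is nondegeneracy, and this is where I expect the main obstacle. The form $\w$ as written on all of $\L^D$ is generically degenerate: the naive count of Tyurin data and principal parts at the $P_i$ does not by itself produce an even-dimensional symplectic leaf, and the eigenvector normalization used to define $\Psi$ introduces gauge redundancy (replacing $\Psi$ by $g\Psi$ with $g$ diagonal leaves $L$ and $K$ unchanged but shifts $\theta_0$). So the second step is to identify the closed invariant submanifold $\P^D$ on which the degeneracies are removed. I would do this by computing the kernel of $\w$ explicitly: one writes $\Omega$ in terms of the variations $\d\a,\d\b,\d z_\ga,\d\k$ and the variations of the principal parts of $L$ at the $P_i$, takes residues, and reads off which tangent directions pair trivially with everything. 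The expected answer is that the constraints defining $\P^D$ are precisely the spectral-curve and level constraints (fixing the characteristic polynomial $\det(L(z)-\l)$ data, or equivalently fixing the values $\nu,\k$ and the behavior that makes $\b$ genuinely dual to $\a$), together with a choice of normalization of $\Psi$ that kills the diagonal gauge freedom.

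The key computational input is the residue formula tying $\Omega$ to the data on the spectral curve. Concretely I would use that the eigenvalue $\l$ and the eigenvector $\Psi$ are meromorphic on the spectral curve $\Sigma_L$, so that $\tr(\d\Psi\wedge\d L\cdot\Psi^{-1})$ can be rewritten, via $\Psi L=K\Psi$ and the identity $\d K=\Psi\,\d L\,\Psi^{-1}+[\d\Psi\cdot\Psi^{-1},K]$, as a sum of local contributions at the poles; the Krichever--Phong mechanism then exhibits $\w$ in Darboux-type form with the action variables provided by periods on $\Sigma_L$ and the conjugate coordinates by the poles of the Baker--Akhiezer vector. Establishing that this local-contribution decomposition exhausts the form, and that no residual null directions survive along $\P^D$, is the crux. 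Once the restriction is shown to be nondegenerate, closedness from the first step completes the proof that $\w|_{\P^D}$ is symplectic. I would rely here on the detailed local analysis of \refE{rel1} and \refE{llaeps} at the $\ga$-points, since the nondegeneracy ultimately reduces to the rank of the pairing between $\d\a$ and $\d\b$ imposed by the constraint $\b^t\s\a=0$.
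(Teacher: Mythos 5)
First, a caveat about the comparison: this paper itself contains no proof of the statement --- \refS{phase} is explicitly a survey, and the theorem is imported from \cite{Klax,Sh0910_Hamilt} --- so your proposal can only be measured against the proofs in those references. Against that standard, your two-step skeleton is the right one. Closedness does follow from the manifest $\delta$-exactness of $\Omega=2\delta\tr(\delta\Psi\cdot\Psi^{-1}K)$, provided the residues are realized as integrals over small contours kept fixed while the $\ga$'s move, so that $\delta$ passes through the integration. Non-degeneracy is indeed obtained in \cite{Klax} by exactly the kind of local residue computation you describe; it exhibits the $\ga$-contributions of $\w$ as canonical pairings of $\delta\k_s$ with $\delta z_{\ga_s}$ and of $\delta\a_s$ with $\delta\b_s$, so your last sentence about the $\delta\a$--$\delta\b$ pairing is in the right spirit. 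Two points you gloss over: since $\Psi$ is defined only up to left multiplication by a diagonal matrix, the primitive $2\tr(\delta\Psi\cdot\Psi^{-1}K)$ is only locally defined, and one must prove separately that $\w$ itself is well defined (a lemma in \cite{Klax}, again via residues); and $\Omega\,dz$ has singularities besides the $\ga_s$ and $P_i$ (branch points of the spectral curve, zeros of $dz$), which is why $\w$ is a sum over the listed points only and why naive residue-theorem manipulations are not available.

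The genuine gap is your identification of $\P^D$, which is the actual content of the theorem. You propose to cut $\P^D$ out by fixing the characteristic polynomial $\det(L(z)-\l)$ (equivalently, the spectral curve), ``or equivalently fixing the values $\nu,\k$,'' plus a gauge-fixing of $\Psi$; all three fail. Fixing the spectral curve fixes every Hamiltonian $H_a$, so on such a level set $dH_a=0$ and all the flows of \refT{hierarch} would be trivial, contradicting the theorem on Hamiltonians quoted immediately after this one; worse, that level set is (an open part of) an isospectral set, i.e.\ a Liouville torus, which is Lagrangian, so $\w$ restricts to it as the zero form --- the opposite of non-degenerate. Fixing the $\k_s$ at the $\ga$-points is equally fatal, since the residue computation shows $\k_s$ to be canonically conjugate to $z_{\ga_s}$: these must remain free coordinates on $\P^D$. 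Finally, the diagonal normalization freedom of $\Psi$ is not a tangent direction of $\L^D$ at all ($L$ and $K$ are unchanged; $\Psi$ is auxiliary), so it cannot account for any kernel of $\w$; it only threatens well-definedness, which is handled by the lemma mentioned above, not by shrinking the phase space. The correct definition in \cite{Klax,Sh0910_Hamilt} is different from all of these: $\P^D$ is a leaf of the foliation of $\L^D$ obtained by fixing only the singular (polar) parts of the eigenvalues of $L$ at the points $P_i$ of the divisor $D$. These polar parts are precisely the Casimirs spanning $\ker\w$; the $\ga$-data $(z_{\ga_s},\k_s,\a_s,\b_s)$ stay dynamical; and invariance of the leaf under the flows \refE{hi} holds because each $M_a$-flow preserves the polar parts of the spectrum at the $P_i$. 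Without this identification the non-degeneracy step --- the crux of the theorem --- cannot be completed.
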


\subsection{Hamiltonians}

\begin{theorem}[\cite{Klax,Sh0910_Hamilt}] The equations of the above
commutative family are Hamiltonian with respect to the {\it
Krichever-Phong symplectic structure} on $\P^D$, with the
Hamiltonians given by
\[
  H_a=-\frac{1}{n+1}\res_{P_i}tr(w_i^{-m}L^{n+1})dw_i
\]
\end{theorem}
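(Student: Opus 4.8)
The plan is to verify that the equations $\partial_a L = [L, M_a]$ from \refT{hierarch} are Hamiltonian with respect to $\w$, with the claimed Hamiltonians $H_a$. The strategy is to compute both sides of the defining relation of a Hamiltonian system, namely $\iota_{X_a}\w = -dH_a$, where $X_a$ is the vector field on $\P^D$ generating the flow \refE{hi}, and show they agree.

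First I would make explicit the action of the form $\w$ on the flow vector field $X_a$. Since $\w$ is assembled from residues at the $\ga_s$ and the $P_i$ of the trace form $\Omega dz$, and $\Omega = 2\d\tr(\d\Psi\cdot\Psi^{-1}K)$ is exact in the phase variables, the contraction $\iota_{X_a}\w$ can be computed by inserting the velocities $\partial_a\a$, $\partial_a\b$, $\partial_a z_\ga$, etc. (the motion equations of Tyurin data and of the main parts of $L$) into $\d\Psi$ and $\d L$. The key structural input is the eigenvector equation $\Psi L = K\Psi$ together with the local expansions of $L$ and $M_a$ at the points of $D$; differentiating the eigenvalue relation along the flow lets one trade the variation of the eigenvectors $\d\Psi$ for the variation of $L$ and $K$, which is the mechanism that turns the symplectic pairing into a residue of a trace of powers of $L$.

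Next I would compute $dH_a$ directly. Using $H_a = -\tfrac{1}{n+1}\res_{P_i}\tr(w_i^{-m}L^{n+1})\,dw_i$ and the identity $\d\tr(L^{n+1}) = (n+1)\tr(L^n\,\d L)$, the differential of $H_a$ becomes a residue of $\tr(w_i^{-m}L^n\,\d L)\,dw_i$. Since $M_a(w_i) = w_i^{-m}L^n(w_i) + O(1)$ near $P_i$ and $M_a$ is holomorphic away from the $\ga$'s and $P_i$, one recognizes $w_i^{-m}L^n$ as the singular part of $M_a$, so the residue defining $dH_a$ is governed by $M_a$. The core of the proof is then to match this against $\iota_{X_a}\w$: invoking the Lax equation $\partial_a L = [L, M_a]$ to substitute for the velocity, and using the cyclicity of the trace together with the vanishing of the total sum of residues of the globally meromorphic 1-form $\tr(\,\cdot\,)\,dz$ on $\Sigma$, one transfers the contribution between the $\ga$-points and the $P_i$-points so that the $\ga$-residues reproduce exactly $-dH_a$.

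The main obstacle will be controlling the contributions of the Tyurin points $\ga_s$, where $L$ has higher-order poles and the constraints \refE{rel1}, \refE{add1}, \refE{add2} must be used to show that the would-be extra residues cancel. Concretely, the local form $L_{-2}=\nu\a\a^t\s$, $L_{-1}=(\a\b^t+\eps\b\a^t)\s$, the orthogonality $\b^t\s\a=0$, and the eigenvalue condition $L_0\a=\k\a$ are precisely what is needed to guarantee that the singular parts at $\ga$ do not obstruct closedness and produce the correct pairing; verifying this residue cancellation case-by-case for $\g=\gl(n),\sln(n),\so(n),\spn(2n)$ is the technical heart. Once the $\ga$-contributions are shown to be well-defined and to combine correctly with the residue sum identity, the equality $\iota_{X_a}\w = -dH_a$ follows, establishing that the flows are Hamiltonian with the stated Hamiltonians.
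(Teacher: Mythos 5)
The paper itself contains no proof of this theorem: \refS{phase} is explicitly a survey, and this statement is imported from \cite{Klax,Sh0910_Hamilt}, so there is no in-paper argument to compare yours against. Judged against the proofs in those references, your outline reconstructs essentially the standard argument: contract $\w$ with the flow vector field of \refT{hierarch}, use the eigenvector relation $\Psi L=K\Psi$ (along the flow one has $\partial_a K=0$ and $\partial_a\Psi=\Psi M_a+D\Psi$ with $D$ diagonal) to trade $\d\Psi$ for $\d L$ and reduce the pairing to residues of $\tr(M_a\,\d L)$, identify $w_i^{-m}L^n$ with the singular part of $M_a$ so that the $P_i$-residues produce $\d H_a$, and invoke the constraints \refE{rel1}--\refE{add2} to control the $\ga$-contributions, with the case analysis over $\gl(n),\sln(n),\so(n),\spn(2n)$ exactly where Sheinman's extension of Krichever's $\gl(n)$ argument does its work. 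Two points your sketch glosses over, and which carry real weight in the cited proofs, are (i) the normalization ambiguity of $\Psi$ --- it is defined only up to left multiplication by diagonal matrices, which is the source of the extra diagonal term $D$ above and must be argued not to contribute to the contraction --- and (ii) the fact that $\w$ is closed and nondegenerate only on the invariant submanifold $\P^D\subset\L^D$, so the identity $\iota_{X_a}\w=-dH_a$ is an identity of forms restricted to that leaf rather than on all of $\L^D$. Neither point invalidates your plan; they are the places where a full write-up would have to follow \cite{Klax,Sh0910_Hamilt} closely.
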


\begin{example} Let $\g=\gl(n)$, $D$ be a divisor of a holomorphic
1-form. Then $\L^D\simeq T^*({\mathcal M}_0)$ where ${\mathcal
M}_0$ is an open subset of the moduli space of holomorphic vector
bundles on $\Sigma$, $H_a$ are Hitchin Hamiltonians.
\end{example}


\section{Conformal field theory related to a Lax integrable system}
\label{S:CFT}

By conformal field theory we mean a family of Riemann surfaces, a
finite rank bundle (of {\it conformal blocks}) on this family, and
a flat connection ({\it Knizhnik-Zamolodchikov connection}) on
this bundle.

We consider the universal bundle of spectral curves over the phase
space $\P^D$ of the integrable system as a family of Riemann
surfaces. For this family we introduce the Kodaira-Spencer mapping
sending tangent vectors on the base ($\P^D$ in our case) to the
vector fields on the correspond\-ing Riemann surfaces of the
family. We use the Sugawara construction to obtain the analog of
Knizhnik-Zamolodchikov connection on this family. The
Knizhnik-Zamolodchikov operators give a projective representation
of the Lie algebra of Hamiltonian vector fields. We prove that
operators corresponding to the family of commuting Hamiltonians
commute up to scalar operators.

In this section we realize the first part of the programme. The
second part starting from construction of the
Knizhnikov-Zamolodchikov connec\-tion is delayed to the next
section.

\subsection{Spectral curves}\label{SS:spec}
For every $L\in\P^D$ (i.e. all arguments of $L$ are fixed except
for $z$) let $\Sigma_L$ be a curve given by the equation
$\det(L(z)-\l)=0$. $\Sigma_L$ is called a {\it spectral curve} of
$L$. It is a $n$-fold branch covering of $\Sigma$.

Thus a family of curves parameterized by points of $\P^D$ is
obtained. We apply a conformal field theory technique (see
\cite{WZWN1,WZWN2,ShN65,ShDiss} and references there) to construct
a bundle of conformal blocks with a projective flat connection on
$\P^D$. We will represent a hamiltonian vector field on $\P^D$ by
the operator of covariant derivative along that.

\subsection{Krichever-Novikov vector fields}\label{SS:KNVir}

Consider an arbitrary Riemann surface with marked points and the
Lie algebra of meromorphic vector fields on it holomorphic outside
the marked points. It is called {\it Krichever-Novikov vector
field algebra} and denoted by $\V$. Below we consider
Krichever-Novikov vector field algebra on $\Sigma_L$ with the
preimages of $P_1,\ldots,P_N$ as marked points. With every $P_i$
and an arbitrary $n\in\N$ we associate an element $e_{i,n}$ such
that the collection of those form a base in $\V$ as of a vector
space, and
\begin{equation*}
[e_{n,p},e_{m,r}]=\d_p^r\,(m-n)\,e_{n+m,p}+
\sum_{h=n+m+1}^{n+m+l}\sum_{s=1}^N\gamma_{(n,p),(m,r)}^{(h,s)}
e_{h,s}
\end{equation*}
with some $\gamma_{(n,p),(m,r)}^{(h,s)}\in\C$, $l\in\N$ (we refer
to \cite{WZWN1,WZWN2,ShN65,ShDiss} for details). Hence the
subspaces $\V_n=\bigoplus\limits_{i=1}^N\C e_{i,n}$ give an almost
graded structure on $\V$ in the same sense as in the
\refT{almgrad}.

\subsection{Kodaira-Spencer cocycle}\label{SS:Kodaira}
Denote the spectral curve over $L$ by $\Sigma_L$ and the Lie
algebra of Krichever-Novikov vector fields on $\Sigma_L$ by
$\V_L$. Our next goal is to define a map $\rho : \T_L\P^D\to\V_L$.
Fix a certain point, say $P_\infty\in\Sigma_L$. We may think of
$P_\infty$ as of analitically depending on $L$. Choose a local
family of transition functions $d_L$ giving the complex structure
on $\Sigma_L$ and analitically depending on $L$. Let us take $X\in
\T_L\P^D$ and a curve $L_X(t)$ in $\P^D$ with the initial point
$L$ and the tangent vector $X$ at $L$. By definition
\begin{equation}\label{E:KodSp2}
   \rho(X)=d_L^{-1}\cdot\partial_Xd_L.
\end{equation}
We consider $\rho(X)$ as a local vector field on the Riemann
surface $\Sigma_L$. It can be explained in two ways. First, every
transition function $d_L$ can be considered as a local analitic
diffeomorphism with annulus domain of definition. Such
diffeomorphisms form a group provided two diffeomorph\-isms
coinsiding in a domain are identified. Hence $\rho(X)$ is a
tangent vector at the unit of the group, i.e. a local vector
field, in a standard way. The relation \refE{KodSp2} is an
invariant definition of that vector field. Now let us give a
definition using local coordinates. Let $\tau$ be a set of local
coordinates on $\L^D$. A family of local diffeomorphisms $d_\tau$
can be considered as an analitic function $d(z,\tau)$ where $z$ is
a local coordinate in the neighborhood of $P_\infty$ on
$\Sigma_L$. For $X=\sum_i X_i
\frac{\partial}{\partial\tau_i}\in\T_\tau\P^D$
\begin{equation}\label{E:ueno}
    (\partial_Xd_\tau)(z)=\sum_i X_i\frac{\partial
    d(z,\tau)}{\partial\tau_i}
\end{equation}
is a local function. It gives a tangent vector to the group of
local diffeomor\-phisms at the point $d_\tau$. Now set
\begin{equation}\label{E:ueno1}
    \rho_z(X)= d_\tau^{-1}(\partial_Xd_\tau(z))
    \ \text{and}\  \rho(X)=\rho_z(X)\frac{\partial}{\partial
    z}
\end{equation}
where $\rho_z(X)$ is a local function, and $\rho(X)$ is a local
vector field expressed in the coordinate $z$.

Summarizing the results of \cite[Sect. 5.1]{ShDiss} we obtain
\begin{proposition}\label{P:kernel}
There exist $e\in\V$ such that locally (in the neighborhood of
$P_\infty$) $\rho(X)=e$. The vector field $e$ is defined up to
adding elements of $\V^{(1)}\oplus\V^{\rm reg}$ where $\V^{(1)}$
is the direct sum of homogeneous subspaces of degree $\ge 0$ in
$\V$, and $\V^{\rm reg}\subset\V$ is the subspace of vector fields
having zero at $P_\infty$. Both $\V^{(1)}$ and $\V^{\rm reg}$ are
Lie subalgebras.
\end{proposition}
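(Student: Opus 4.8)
The plan is to interpret \refE{ueno1} cohomologically: the locally defined field $\rho(X)$ is a \v{C}ech representative of the Kodaira--Spencer class of $X$, which I then realize by a genuine Krichever--Novikov field, controlling the remaining freedom through the almost-graded structure of $\V$.

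First I would cover $\Sigma_L$ by the coordinate disc $U_\infty$ around the distinguished point $\Pinf$ and its complement $U_0=\Sigma_L\setminus\{\Pinf\}$, which contains the preimages of $P_1,\ldots,P_N$. The family of transition functions $d_\tau$ glues $U_\infty$ to $U_0$, so by \refE{KodSp2} and \refE{ueno1} the field $\rho(X)=\rho_z(X)\,\partial/\partial z$, defined on the annulus $U_\infty\cap U_0$, is exactly a \v{C}ech $1$-cocycle for the tangent sheaf $\T\Sigma_L$ subordinate to $\{U_\infty,U_0\}$; its class in $\He(\Sigma_L,\T\Sigma_L)$ is the image of $X$. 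This is the content summarized from \cite[Sect.~5.1]{ShDiss}, which I would invoke rather than reprove.

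For the existence of $e$, I observe that, since $d(z,\tau)$ is analytic and invertible in $z$, the function $\rho_z(X)$ is a Laurent series at $\Pinf$ whose principal part has order bounded in terms of the order of the deformation and of the genus $g$, hence is finite. Using the almost-graded basis $\{e_{i,n}\}$ of $\V$ --- the analogue for vector fields of \refT{almgrad}, whose negative-degree members carry prescribed poles at $\Pinf$ --- I would pick a finite linear combination $e\in\V$ reproducing this principal part. Then $\rho(X)-e$ is holomorphic at $\Pinf$, so $\rho(X)=e$ in a neighborhood of $\Pinf$, as asserted.

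It remains to describe the ambiguity and to verify the subalgebra claims. Two global fields represent the same cohomology class precisely when they differ by a \v{C}ech coboundary, i.e.\ by a field holomorphic on $U_0$ together with a field holomorphic on $U_\infty$; under the identification above these two coboundary directions correspond respectively to $\V^{(1)}$, the sum of homogeneous pieces of degree $\ge 0$ (those holomorphic on $U_0$ in the grading convention of \cite{ShDiss}), and to $\V^{\rm reg}$, the fields vanishing at $\Pinf$ (holomorphic on $U_\infty$). That $\V^{(1)}$ is a Lie subalgebra follows at once from the almost-graded relation $[\V_k,\V_l]\subseteq\bigoplus_{m=k+l}^{k+l+g}\V_m$ of \refT{almgrad}: for $k,l\ge 0$ every resulting term has degree $\ge 0$. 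That $\V^{\rm reg}$ is a subalgebra is pointwise, since the bracket of two fields vanishing at $\Pinf$ again vanishes there. The step I expect to be the real obstacle is the existence claim: one must bound the order of the principal part of $\rho_z(X)$ at $\Pinf$ and check that the negative-degree basis elements of $\V$ suffice to match it, which is exactly where the finiteness built into the almost-grading (the genus-dependent bound of \refT{almgrad}) enters.
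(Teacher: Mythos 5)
You are proving a statement the paper itself does not prove: \refP{kernel} is presented as a summary of \cite[Sect.~5.1]{ShDiss}, with no argument given, so your proposal can only be measured against what such an argument must contain. Your overall architecture is the right one, and two of your three steps are sound: reading \refE{KodSp2}--\refE{ueno1} as a \v{C}ech $1$-cocycle for the cover $\{U_0,U_\infty\}$ is exactly the intended interpretation, and both subalgebra verifications (the almost-graded bracket relation for $\V^{(1)}$, the pointwise computation for $\V^{\rm reg}$) are correct.

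The genuine gap is in the existence step, which you yourself flag as the crux. The claim that $\rho_z(X)$ has a \emph{finite} principal part at $P_\infty$ does not follow from analyticity and invertibility of $d(z,\tau)$: the field $\rho_z(X)=d_\tau^{-1}\partial_X d_\tau$ is holomorphic only on the gluing annulus, and a function holomorphic on an annulus has, in general, a Laurent expansion with infinitely many negative powers; nothing in your argument makes it extend meromorphically to the punctured disc about $P_\infty$. That finiteness is precisely the nontrivial content of the proposition and has to be supplied, either by choosing the family of transition functions $d_L$ adapted to the algebraic family of spectral curves $\det(L(z)-\l)=0$, so that $\rho_z(X)$ is meromorphic at $P_\infty$, or by a cohomological argument: finite-dimensionality of $H^1(\Sigma_L,T\Sigma_L)$ plus surjectivity of the restriction map $\V\to H^1$, which is a Riemann--Roch/Serre duality count using that $\Sigma_L$ minus the marked points is an open (hence Stein) curve. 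Two further inaccuracies compound this. First, ``$\rho(X)-e$ is holomorphic at $P_\infty$, so $\rho(X)=e$ in a neighborhood of $P_\infty$'' is a non sequitur --- holomorphic is not zero; what follows is only equality of cohomology classes, and the holomorphic remainder, being merely a local field on $U_\infty$, is not an element of $\V^{\rm reg}$ and so cannot be absorbed into the stated ambiguity. Second, the space of \v{C}ech coboundaries lying in $\V$ is strictly larger than $\V^{(1)}\oplus\V^{\rm reg}$: it also contains fields holomorphic but non-vanishing at $P_\infty$, and negative-degree fields holomorphic at all the $P_i$'s. The difference is exactly what records the positions of the marked points and the coordinate jet at $P_\infty$ in the moduli problem underlying \cite{WZWN2,ShDiss}, so identifying the ambiguity with ``all coboundaries'' proves a slightly different (weaker) statement than the one asserted.
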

Below, we always regard to $\rho(X)$ as to an element of $\V^{\rm
reg}\backslash\V/\V^{(1)}$.

A local vector field in the annulus centered at $P_\infty$ gives a
class of Chech $1$-cohomologies of the Riemann surface $\Sigma$
with coefficients in the tangent sheaf. The cohomology class
represented by the vector field $\rho(X)$ is called the {\it
Kodaira-Spencer class} of $X$. It is responsible for the
deformation of moduli of the pointed surface along $X$. In this
form the Kodaira-Spencer class was used for example in \cite[Lemma
1.3.8]{rUcft}, and also in \cite{ShDiss}.

\subsection{From Lax operator algebra to commutative
Krichever-Novikov algebra}\label{SS:aff_KN}

In this section, we canonically associate a commutative
Krichever-Novikov algebra to a generic element $L\in\gb$. We need
that for the Sugawara construction below. Indeed, the Sugawara
construction \cite{KaRa,{KNFa},{WZWN2}} requires that the current
algebra splitted to the tensor product of a functional algebra and
a finite dimensional Lie algebra. Krichever-Novikov algebras are
of this type, and Lax operator algebras are not.

Let $\Psi$ be the matrix formed by the canonically normalized left
eigenvectors of $L$. In the case $\g=\gl(n)$ we consider a vector
$\psi$ to be canically normalized if $\sum\psi_i=1$
\cite{Klax,Sh0910_Hamilt}. In the other cases we require that
$\Psi\in G$ point-wise, i.e. $\Psi^t=-\eps\s\Psi^{-1}\s^{-1}$
where $G=SO(n), Sp(2n)$ depending on $\g$, and $\eps$ satisfies to
$\s^t=-\varepsilon\s$, i.e. $\varepsilon=-1$ if $\s$ is symmetric
(the case $\g=\so(n)$), $\varepsilon=1$ if $\s$ is skew-symmetric
(the case $\g=\spn(2n)$). As usual, $\eps=0$ for $\g=\gl(n)$.
$\Psi$ is defined modulo normalization and permutations of its
rows (such normalization descends to the left multiplication
$\Psi$ by a diagonal matrix). We also consider the diagonal matrix
$K$ defined by
\begin{equation}\label{E:eigenL}
   \Psi L=K\Psi,
\end{equation}
i.e. formed by the eigenvalues of $L$. Similar to
\cite{Klax,Sh0910_Hamilt}
\begin{equation}\label{E:inverse}
\begin{aligned}
\Psi(z)&=\frac{\varepsilon{\tilde\b}\a^t\s}{z-z_\ga}
+\Psi_0+\Psi_1(z-z_\ga)+\ldots\ , \\
\Psi^{-1}(z)&=\frac{\a{\tilde\b}^t\s}{z-z_\ga}+\tilde\Psi_0+
\tilde\Psi_1(z-z_\ga)+\ldots
\end{aligned}
\end{equation}
in the neighborhood of a $\ga$. The residue of $\Psi$ which is
absent in \cite{Klax,Sh0910_Hamilt} appears here for $\g=\so(n)$
and $\g=\spn(2n)$ due to the requirement
$\Psi^t=-\s\Psi^{-1}\s^{-1}$. As it is shown in
\cite{Sh0910_Hamilt} the following relations hold which are
essentially equivalent to holomorphy of $\Psi L$ and the relation
$\Psi\Psi^{-1}=id$:
\begin{equation}\label{E:gpos}
 \Psi_0\a=0, \ \varepsilon\a^t\s{\tilde\Psi}_0=0.
\end{equation}
Observe that if $\varepsilon=0$ (i.e. $\g=\gl(n)$) then
$\nu=\l=0$. Hence
$\nu\a^t\s{\tilde\Psi}_0=\l\a^t\s{\tilde\Psi}_0=0$ as well. For
the same reason $\varepsilon\a^t\s\a=\nu\a^t\s\a=\l\a^t\s\a=0$.
The following Lemma is important only as a motivation for the next
\refL{comp}, no proof below relies on it.
\begin{lemma}[\cite{Sh0910_Hamilt}, Lemma 7.2]\label{L:holK}
The matrix-valued function $K$ is holo\-morphic at all
$\ga$-points provided \refE{inverse},\refE{gpos} hold there.
\end{lemma}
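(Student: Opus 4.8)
**The plan is to prove holomorphy of $K$ at each $\gamma$-point by combining the defining relation $K=\Psi L\Psi^{-1}$ with the local expansions of $L$, $\Psi$, and $\Psi^{-1}$ near $\gamma$, and then checking that all a priori singular terms cancel.** Near a fixed $\gamma$-point the operator $L$ has (depending on $\g$) a simple or double pole governed by \refE{rel1}, while \refE{inverse} gives the Laurent expansions of $\Psi$ and $\Psi^{-1}$ with at most simple poles whose residues are $\varepsilon\tilde\b\a^t\s$ and $\a\tilde\b^t\s$ respectively. Since $K=\Psi L\Psi^{-1}$, the worst conceivable singularity is of order $(z-z_\ga)^{-4}$ (from the product of two simple poles of $\Psi,\Psi^{-1}$ with the double pole of $L$), so the task is to show that the coefficients of $(z-z_\ga)^{-4},\,(z-z_\ga)^{-3},\,(z-z_\ga)^{-2},\,(z-z_\ga)^{-1}$ all vanish.

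First I would multiply out the three Laurent series and collect the negative-order coefficients as explicit polynomial expressions in the data $\a,\b,\tilde\b,\s,\nu,\varepsilon$ together with $\Psi_0,\tilde\Psi_0,\Psi_1,\ldots$. The key structural inputs are the relations $L_{-2}=\nu\a\a^t\s$, $L_{-1}=(\a\b^t+\varepsilon\b\a^t)\s$, $\b^t\s\a=0$ from \refE{rel1}, the residue conditions $\Psi_0\a=0$ and $\varepsilon\a^t\s\tilde\Psi_0=0$ from \refE{gpos}, together with the scalar identities noted just before the Lemma, namely $\nu\a^t\s\tilde\Psi_0=0$ and $\varepsilon\a^t\s\a=\nu\a^t\s\a=0$. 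The guiding principle is that nearly every potentially singular term is sandwiched so as to produce a factor of the form $\a^t\s\a$, $\b^t\s\a$, $\Psi_0\a$, or $\a^t\s\tilde\Psi_0$, each of which is zero by the hypotheses. For instance, the leading $(z-z_\ga)^{-4}$ term is proportional to $(\varepsilon\tilde\b\a^t\s)(\nu\a\a^t\s)(\a\tilde\b^t\s)$, which contains the scalar factor $\a^t\s\a=0$ and hence vanishes immediately; the lower-order terms are handled by the same bookkeeping.

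The main obstacle I anticipate is the cancellation at orders $(z-z_\ga)^{-2}$ and $(z-z_\ga)^{-1}$, where the singular contributions no longer collapse through a single scalar factor but require using $\Psi_0\a=0$ and $\varepsilon\a^t\s\tilde\Psi_0=0$ in tandem, and possibly the higher relation encoded in the holomorphy of $\Psi L$ (i.e. the constraint linking $\Psi_1$ and $L_0$ coming from $\Psi L=K\Psi$ together with the eigenvector condition $L_0\a=\k\a$). Here the two cases $\varepsilon=\pm1$ must be treated with care, and I would keep track of the symmetry condition $\Psi^t=-\varepsilon\s\Psi^{-1}\s^{-1}$, which relates the expansion coefficients of $\Psi$ to those of $\Psi^{-1}$ and supplies the extra relations needed to close the argument. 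Once all four negative-order coefficients are shown to vanish, $K$ is holomorphic at $\gamma$, completing the proof; since the chosen $\gamma$ was arbitrary, the same computation gives holomorphy at every $\gamma$-point.
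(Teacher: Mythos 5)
The paper itself contains no proof of this lemma to compare against: it is imported from \cite{Sh0910_Hamilt} (Lemma 7.2), and the text states explicitly that it serves only as motivation for \refL{comp} and that no proof in the paper relies on it. Your strategy --- write $K=\Psi L\Psi^{-1}$, multiply the three Laurent expansions at a $\ga$-point, and kill the coefficients of orders $-4$ through $-1$ by exhibiting in each term a vanishing scalar factor --- is the natural one, and it is the same kind of computation the paper does carry out in proving \refL{comp} for the reverse product $\Psi^{-1}h\Psi$. Your handling of the orders $-4$, $-3$ and $-2$ goes through exactly as you describe, using only the relations you list ($\b^t\s\a=0$, $L_0\a=\k\a$, $\Psi_0\a=0$, $\varepsilon\a^t\s\tilde\Psi_0=0$, $\varepsilon\a^t\s\a=\nu\a^t\s\a=0$).

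At order $-1$, however, two terms survive your toolkit, and closing them requires inputs you never name. (i) Writing $\Psi_{-1}=\varepsilon\tilde\b\a^t\s$ and $(\Psi^{-1})_{-1}=\a\tilde\b^t\s$ for the residues, the term $\Psi_{-1}L_1(\Psi^{-1})_{-1}=\varepsilon\,\tilde\b\,(\a^t\s L_1\a)\,\tilde\b^t\s$ contains no factor of the form $\a^t\s\a$, $\b^t\s\a$, $\Psi_0\a$ or $\a^t\s\tilde\Psi_0$; it vanishes only because of the additional constraint \refE{add2}, $\a^t\s L_1\a=0$, which is imposed in the very definition of a Lax operator for $\g=\spn(2n)$ (for $\g=\so(n)$ it follows from skew-symmetry of $L_1$ together with \refE{add1}, and for $\gl(n)$ from $\varepsilon=0$). (ii) The term $\Psi_{-1}L_0\tilde\Psi_0=\varepsilon\,\tilde\b\,(\a^t\s L_0)\,\tilde\Psi_0$ has $L_0$ wedged between $\a^t\s$ and $\tilde\Psi_0$, so neither $L_0\a=\k\a$ nor \refE{gpos} applies as stated; one must first use that $L_0$ takes values in $\g$, i.e. $L_0^t=-\s L_0\s^{-1}$ in the orthogonal and symplectic cases, to convert $\a^t\s L_0$ into $-(L_0\a)^t\s=-\k\,\a^t\s$, after which $\varepsilon\a^t\s\tilde\Psi_0=0$ finishes that term. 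With these two additions (plus $\nu\a^t\s\tilde\Psi_0=0$, which you do quote, for the term $\Psi_1L_{-2}\tilde\Psi_0$), every negative-order coefficient is killed and your argument closes; so the gap is real but entirely fillable from hypotheses already present in the paper's setup.
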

Hence $K$ is a meromorphic diagonal matrix-valued function on
$\Sigma$ holomorphic outside $P_i$'s. If we denote the algebra of
scalar-valued functions possessing this property by $\A$ then
$K\in\h\otimes\A$ where $\h\subset\g$ is a diagonal (Cartan)
subalgebra. $\A$ is called {\it Krichever-Novikov function
algebra} on $\Sigma$, and $\h\otimes\A$ the corresponding {\it
Krichever-Novikov current algebra}. We obtain only commutative
current algebras here.  Let be $\hb=\h\otimes\A$.

In what follows we need a slightly different set-up. We consider
the algebra $\A_L$ similar to $\A$ but defined on $\Sigma_L$, and
having pre-images of the points $P_i$ as the collection of poles.
Let us take an arbitrary element of $\A_L$ and push it down to
 $\Sigma$ as a diagonal matrix $h$. Every sheet is assigned with
 a certain row of $h$. At the branching points we may
 obtain a coincidence of eigenvalues of $h$. The order of diagonal elements of the
 matrix $h$ will depend on the
 order of the sheets. This ambiguity is the same as for $\Psi$.
 The permutation of rows corresponding to an element $w$ of the Weil
 group descends to $\Psi\to w\Psi$ (which is easy verified for $w$
 to be a transposition), and $h$ transforms as $h\to whw^{-1}$.
 Thus $L=\Psi^{-1}h\Psi\to L$. Thus the mapping $\A_L\to\hb$ is
 just the direct image of functions from $\A_L$.

\subsection{The representation of $\hb$}\label{SS:repr}
We need a representation of $\hb$ in order to construct the bundle
of conformal blocks and Knizhnik-Zamolod\-chikov connection in the
next section. Recall that $\gb$ stays for the Lax operator algebra
in question.
\begin{lemma}\label{L:comp}
For any $h\in\hb$ we have $\Psi^{-1} h\Psi\in\gb$.
\end{lemma}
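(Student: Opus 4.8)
The plan is to read \refL{comp} as the inverse of the diagonalization $L\mapsto K=\Psi L\Psi^{-1}\in\hb$: where that passage sends a Lax operator to a Cartan-valued current, here we must send a current $h\in\hb$ back to a Lax operator. Writing $\Lambda:=\Psi^{-1}h\Psi$, membership $\Lambda\in\gb$ splits into three verifications: that $\Lambda$ is $\g$-valued at a generic point; that it is meromorphic, holomorphic off the $P_i$ and the $\ga$, with at worst a double pole at each $\ga$; and that at every $\ga$ its Laurent expansion satisfies \refE{rel1} (together with \refE{add1} and \refE{add2}). The key point licensing this is that the only features of $K$ used in checking these same conditions for $L=\Psi^{-1}K\Psi$ are that $K$ is $\h$-valued and holomorphic at the $\ga$-points; a general $h\in\hb=\h\otimes\A$ shares both features, since $\A$ has poles only at the $P_i$, so the whole argument transcribes with $h$ in place of $K$.

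For the pointwise statement, away from the $\ga$ and the $P_i$ the matrix $\Psi$ is holomorphic and invertible, and either $\g=\gl(n)$, so that conjugation preserves $\g$ tautologically, or $\Psi\in G$ by its normalization $\Psi^t=-\eps\s\Psi^{-1}\s^{-1}$. As $h(z)\in\h\subset\g$ and the adjoint action of $G$ preserves $\g$, we get $\Lambda(z)\in\g$ for generic $z$ and hence wherever $\Lambda$ is defined. The pole statement is read off \refE{inverse}: at a $\ga$ the factor $h$ is holomorphic while $\Psi$ and $\Psi^{-1}$ carry at most simple poles, so $\Lambda$ has at most a double pole there; the poles at the $P_i$ are unconstrained and $\Lambda$ is holomorphic elsewhere.

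The substance is the local form at a $\ga$. Put $t=z-z_\ga$, abbreviate the residues $P=\eps\tilde\b\a^t\s$ and $Q=\a\tilde\b^t\s$ of \refE{inverse}, and write $h=h_0+h_1t+\dots$ with $h_0,h_1\in\h$. Multiplying the three series, the coefficient of $t^{-2}$ is $Qh_0P=\eps(\tilde\b^t\s h_0\tilde\b)\,\a\a^t\s$, which is of the required shape $\nu\a\a^t\s$ and vanishes when $\eps=0$, i.e. for $\gl(n)$ and $\sln(n)$. For the remaining relations I would exploit that $\Psi(z)\a=O(t)$: indeed $P\a=\eps\tilde\b(\a^t\s\a)=0$ because $\a$ is isotropic for $\s$ (by \refE{add1} for $\so(n)$, automatically for the skew form of $\spn(2n)$, and vacuously for $\gl(n)$), while $\Psi_0\a=0$ by \refE{gpos}. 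Therefore $\Lambda\a=\Psi^{-1}h(\Psi\a)=Qh_0\Psi_1\a+O(t)=(\tilde\b^t\s h_0\Psi_1\a)\,\a+O(t)$ has no $t^{-2}$- or $t^{-1}$-term; matching against $\Lambda\a=t^{-2}\Lambda_{-2}\a+t^{-1}\Lambda_{-1}\a+\Lambda_0\a+\dots$ gives simultaneously $\Lambda_{-1}\a=0$ and $\Lambda_0\a=\k\a$ with $\k=\tilde\b^t\s h_0\Psi_1\a$; the second is already the eigenvector relation of \refE{rel1}, and the first will produce $\b^t\s\a=0$ once the shape of $\Lambda_{-1}$ is settled.

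What remains is to recognize the residue in the symmetric form $\Lambda_{-1}=(\a\b^t+\eps\b\a^t)\s$, after which $\b^t\s\a=0$ follows from $\Lambda_{-1}\a=0$. Collecting the $t^{-1}$-coefficient gives $\Lambda_{-1}=\a(\tilde\b^t\s h_0\Psi_0)+\eps(\tilde\Psi_0 h_0\tilde\b)\a^t\s+\eps(\tilde\b^t\s h_1\tilde\b)\a\a^t\s$. For $\gl(n)$ only the first term survives and $\b^t:=\tilde\b^t h_0\Psi_0$ does the job, $\b^t\a=0$ being recovered once more from $\Psi_0\a=0$. The genuine obstacle is $\g=\so(n),\spn(2n)$: here one must check that the first two rank-one blocks assemble into a single vector $\b$ with the prescribed $\eps$-symmetry while the $\a\a^t\s$-term is absorbed into the freedom $\b\mapsto\b+\C\a$. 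This is where the group relation on $\Psi$, expanded order by order in $t$ and combined with the identities from $\Psi\Psi^{-1}=\Psi^{-1}\Psi=\mathrm{id}$ and \refE{gpos}, is used to identify $\tilde\Psi_0 h_0\tilde\b$ with $\s^{-1}\Psi_0^t h_0\s\tilde\b$ modulo $\C\a$. I expect the sign bookkeeping in this identification to be the only delicate step; \refE{add1} is then inherited from the fixed Tyurin datum $\a$, and \refE{add2} for $\spn(2n)$ reduces to $\a^t\s\Lambda_1\a=0$, read off the next coefficient of the same expansion.
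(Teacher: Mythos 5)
Your strategy is the same as the paper's: expand $\Psi^{-1}h\Psi$ at a point $\ga$ using \refE{inverse} and check the constraints \refE{rel1}, with $\Psi\in G$, \refE{gpos} and $\a^t\s\a=0$ as the inputs. Your $\Psi\a=O(t)$ device is a clean equivalent of the paper's term-by-term check of the eigenvalue condition (it yields the same $\k=\tilde\b^t\s h_0\Psi_1\a$), and you are even more careful than the paper's displayed expansion \refE{L_in_g} in retaining the residue term $\eps(\tilde\b^t\s h_1\tilde\b)\a\a^t\s$ and in flagging \refE{add2}. However, the case $\g=\so(n)$ fails as you have written it. By \refE{llaeps} one has $\nu\equiv0$ for $\so(n)$, so it is not enough that the $t^{-2}$ coefficient is ``of the required shape $\nu\a\a^t\s$'': it must vanish, and your argument only gives vanishing when $\eps=0$. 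Likewise your absorption of the $\a\a^t\s$ piece of the residue into the freedom $\b\mapsto\b+\C\a$ works only for $\spn(2n)$: when $\eps=-1$ one has $(\a(c\a)^t-(c\a)\a^t)\s=0$, so no nonzero multiple of $\a\a^t\s$ can be absorbed, and that piece must also vanish identically. The missing idea --- the opening step of the paper's proof --- is that for $h_j\in\h\subset\g$ and $\s$ symmetric, $(\s h_j)^t=h_j^t\s^t=-\s h_j\s^{-1}\cdot\s=-\s h_j$, so $\s h_j$ is skew and $\tilde\b^t\s h_j\tilde\b=0$; this kills both offending terms for $\so(n)$, and simultaneously explains why the double pole legitimately survives exactly for $\spn(2n)$, where $\s h_0$ is symmetric.

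Second, the step you defer as ``sign bookkeeping'' is the heart of the lemma and is left unproved; moreover your provisional identification ($\tilde\Psi_0h_0\tilde\b$ equal to $\s^{-1}\Psi_0^t h_0\s\tilde\b$ modulo $\C\a$) is not the statement you need --- the correct target is $\tilde\Psi_0 h_0\tilde\b=-\s^{-1}\Psi_0^t\s h_0\tilde\b$, and it holds exactly, not modulo $\C\a$. The paper closes this in two lines: the required symmetric shape of the residue reduces, via $h_0^t=-\s h_0\s^{-1}$, to the single relation $\tilde\Psi_0^t=-\s\Psi_0\s^{-1}$, which is precisely the $t^0$ coefficient of the group relation $(\Psi^{-1})^t=-\s\Psi\s^{-1}$, i.e.\ of $\Psi\in G$. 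Finally, your pointwise claim that $\Psi$ is ``holomorphic and invertible'' off the $\ga$'s and $P_i$'s is unjustified: $\Psi$ has spurious singularities coming from its normalization, and the paper devotes the last paragraph of its proof to exactly this, observing that since $h$ and the normalizing diagonal matrix $d$ are both diagonal, $(d\Psi)^{-1}h(d\Psi)=\Psi^{-1}h\Psi$, so the conjugation is normalization-independent and therefore regular at those points. Your proposal needs all three repairs to become a proof.
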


\begin{proof} Let us take $h$ in the form
\[ h=h_0+h_1z+\ldots
\]
in a neighborhood of a $\ga$ where we set $z_\ga=0$ for
simplicity. By \refE{inverse} we obtain
\begin{equation}\label{E:L_in_g}
\begin{aligned}
 & L =\Psi^{-1}
 h\Psi =\frac{\varepsilon\a\tilde\b^t\s h_0\tilde\b\a^t}{z^2}+
 \frac{\a\tilde\b^t\s h_0\Psi_0+\varepsilon\tilde\Psi_0h_0\tilde\b\a^t\s}{z}
 \\& +(\a\tilde\b^t\s h_0\Psi_1
 +\a\tilde\b^t\s h_1\Psi_0+\varepsilon\a\tilde\b^t{\s}h_2\tilde\b^t\a^t\s
 +\tilde\Psi_0 h_0\Psi_0 \\&
 +\varepsilon\tilde\Psi_0 h_1\tilde\b^t\a^t\s+
 \varepsilon\tilde\Psi_1 h_0\tilde\b^t\a^t\s)
 \\& +\ldots\ .
\end{aligned}
\end{equation}
If $\g=\so(n)$ then $\s^t=\s$. We have $(\s h_0)^t=h_0^t\s^t=-\s
h_0\s^{-1}\cdot\s=-\s h_0$, i.e. $\s h_0$ is skew-symmetric. Hence
$\tilde\b^t(\s h_0)\tilde\b=0$ and the term with $z^{-2}$ in
\refE{L_in_g} vanishes. Observe that for $\g=\spn(2n)$ it does not
vanish because $\s h_0$ is symmetric.

Let us consider the term with $z^{-1}$. We must represent the
nominator in the form
$(\a\widehat\b^t+\varepsilon\widehat\b\a^t)\s$, i.e. prove that
$\a\tilde\b^t\s h_0\Psi_0\s^{-1}
=(\tilde\Psi_0h_0\tilde\b\a^t)^t$. By $h_0^t=-\s h_0\s^{-1}$ it
would be sufficient that $\tilde\Psi_0^t=-\s \Psi_0\s^{-1}$. The
latter follows from the requirement $(\Psi^{-1})^t=-\s
\Psi\s^{-1}$, i.e. $\Psi\in G$ where $G$ is a classical group
corresponding to $\g$, and which is included into the definition
of $\Psi$ above.

Next, let us check the eigenvalue condition on $L_0$. To this end,
let us operate on $\a$ by the six-term expression in bracket in
\refE{L_in_g}. Three summands containing the combination
$\varepsilon\a^t\s$ will vanish by $\varepsilon\a^t\s\a=0$. Two
more summands having $\Psi_0$ on the right hand side will vanish
by $\Psi_0\a=0$. The remaining summand will give $\a(\tilde\b^t\s
h_0\Psi_1\a)$. Observe that the expression in brackets is a
scalar.

To complete the proof of the theorem we must check that $\Psi^{-1}
h\Psi$ is holomorphic except at $P_i$'s and $\ga_s$'s.  $\Psi$
possesses certain singular points due to the norm\-ali\-zation
($\sum\psi_i=1$ where $\psi$ is a row of $\Psi$). Let us prove
that $\Psi^{-1} h\Psi$ is holomorphic at those points for any
$h\in\hb$. By \refE{eigenL} $\Psi$ is defined up to left
multiplication by a diagonal matrix $d$ (corresponding to
normalization of the rows of $\Psi$). But since $h$ and $d$ are
both diagonal, $(d\Psi)^{-1} hd\Psi=\Psi^{-1} h\Psi$, hence
$\Psi^{-1} h\Psi$ does not depend on the normali\-zation and can
be defined without any normalization. But then $\Psi$ has no
singularities which are due to normalization.
\end{proof}

By \refL{comp} any representation of $\gb$ induces the
corresponding representation of $\hb$. Since $\Psi$ is meromorphic
at $P_i$'s, the just constructed mapping $\hb\to\gb$ preserves
degree. Hence an almost graded $\g$-module induces the
almost-graded $\hb$-module. Let us remind that $\Psi$ is defined
up to left multiplication by a diagonal matrix. For this reason
$\hb$ needs to be diagonal, otherwise the mapping $\hb\to\gb$ is
not well-defined.

Consider the following natural representation of $\gb$. Let
$\mathcal F$ be the space of meromorphic vector-valued functions
$\psi$ holomorphic except at $P_1,\ldots,P_N$ and $\ga$'s, such
that
\[
 \psi(z)=\nu\frac{\a}{z} +\psi_0+\ldots
\]
at any point $\ga$. $\mathcal F$ is an almost graded $\gb$-module
with respect to the Krichever-Novikov base introduced in
\cite{Sh_ferm}. Consider the semi-infinite degree of this module
which is also constructed in \cite{Sh_ferm}. Denote it by
${\mathcal F}^{\infty/2}$. The induced $\hb$-module is what we
will consider below. This is an {\it admissible} module in the
sense that every its element annihilates having been multiply
operated by an element of $\hb$ of a positive degree. Moreover, it
is generated by a vacuum vector. By the above constructed mapping
$\A_L\to\hb$ we also consider ${\mathcal F}^{\infty/2}$ as an
$\A_L$-module.


\subsection{Sugawara representation}\label{SS:sugawara}

In this paper we need a "commutative"\ version of Sugawara
construction developed in \cite{KNFb} to be applied to $\hb$. See
\cite{WZWN2,ShDiss,ShN65} for more details.

Consider an admissible $\hb$-module $V$. We are mainly interested
in the case $V={\mathcal F}^{\infty/2}$ here.

For any admissible representation of an affine Krichever-Novikov
algebra (say $\hb$) there exist a projective representation T of
the Krichever-Novikov vector field algebra canonically defined by
the relation
\[  [T(e),h(A)]=-c\cdot h(eA)
\]
where $h\in\hT$, $A\in\A_L$, $e\in\V_L$, $h(A)$ denotes the
representation operator of $h\otimes A\in\hT$, $eA$ denotes the
natural action of a vector field on a function, $c$ is a level of
the $\hb$-module (in the non-commutative case we would have $c+\k$
instead $c$ where $\k$ is the dual Coxeter number).

The representation $T$ has the following effective definition. Let
$\{A_j\}$, $\{\w^k\}$, $\{e_m\}$ be the Krichever-Novikov bases in
$\A_L$, the space of Krichever-Novikov 1-formes and
Kricher-Novikov vector fields on $\Sigma_L$ (the first two are
dual), $\{h_i\}$ and $\{h^i\}$ is a pair of dual bases in $\h$ and
$\h^*$. Then $T(e_m)=\sum_{i,j,k} c^{jk}_m\nord{h_i(A_j)h^i(A_k)}$
where $c^{jk}_m=\res_{P_\infty}\w^j\w^ke_m$, $\nord{}$ is a normal
ordering.
\begin{remark} There would be two problems if we wanted to repeat the
Sugawara construction for Lax operator algebras. The first is
related to generalization of the relation $[T(e),u(L)]=u(e.L)$.
The right hand side must contain a connection $\nabla=d+\w$ and
have the form $[T(e),u(L)]=u(\nabla_e L)$ but it is not evident
how $\nabla$ enters the left hand side. Another problem is related
to the definition of $c^{jk}_m$, namely how make the corresponding
1-form $\w^j\w^ke_m$ to be regular at the $\ga$-points. Another
face of the problem is that we need a current algebra to be
splitted to tensor product of a finite-dimensional and a function
algebra to carry out the Sugawara construction.

By introducing $\hb$ and $\A_L$ we manage with the known version
of Sugawara construction for Krichever-Novikov algebras, and even
for commutative Krichever-Novikov algebras.

The role of $L$ is to define an admissible representation of
$\hb$, $\A_L$. Observe that the Lax operator $L$ is fixed as a
function on $\L^D$ and of the spectral parameter, hence $\Psi$ is
uniquely defined up to the above discussed equivalence, and the
representation is well-defined.
\end{remark}

\section{Representation of the algebra of Hamiltonian vector fields}\label{S:repr}
In this section we construct the Knizhnik-Zamolodchikov connection
on the family of spectral curves. The Knizhnik-Zamolodchikov
operators give a projective representation of the Lie algebra of
Hamiltonian vector fields. We prove that operators corresponding
to the family of commuting Hamiltonians commute up to scalar
operators. We prove unitarity of that representation.

\subsection{Conformal blocks and Knizhnik-Zamolodchikov connection}\label{S:cb}
Let us consider the sheaf of $\A_L$-modules ${\mathcal
F}^{\infty/2}$ on $\P^D$. Let $\h^{reg}\subset\hb$ be a subalgebra
consisting of the functions regular at $P_\infty$. The sheaf of
quotients ${\mathcal F}^{\infty/2}/\h^{reg}$ on $\P^D$ is called
the sheaf of {\it covariants} (over a different base it was
defined in \cite{WZWN2} in this way).

Let $X$ be a vector field on $\P^D$. By definition
\begin{equation}\label{E:KZ}
\nabla_X=\partial_X+T(\rho(X))
\end{equation}
where $\rho$ is the Kodaira-Spencer mapping, $T$ is the Sugawara
representation in ${\mathcal F}^{\infty/2}/\h^{reg}$.
\begin{theorem}[\cite{WZWN2,ShDiss}]\label{T:pr_flat} $\nabla$ is
a projective flat connection on the sheaf of coinvariants:
\[
 [\nabla_X,\nabla_Y]=\nabla_{[X,Y]}+\l(X,Y)\cdot id
\]
where $\l$ is a certain cocycle, $id$ is the identity operator.
\end{theorem}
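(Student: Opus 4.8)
The plan is to establish projective flatness of $\nabla$ by reducing it to the known flatness of the Knizhnik-Zamolodchikov connection for Krichever-Novikov current algebras, as developed in \cite{WZWN2}. Since the theorem is explicitly attributed to \cite{WZWN2,ShDiss}, the goal is to verify that the present set-up—where the current algebra is the commutative algebra $\hb$ acting on $\mathcal F^{\infty/2}/\h^{\mathrm{reg}}$ over the base $\P^D$—satisfies the hypotheses under which the cited results apply. First I would unwind the definition $\nabla_X=\partial_X+T(\rho(X))$ and compute the commutator $[\nabla_X,\nabla_Y]$ directly, expanding it as
\[
[\partial_X,\partial_Y]+[\partial_X,T(\rho(Y))]-[\partial_Y,T(\rho(X))]+[T(\rho(X)),T(\rho(Y))].
\]
The first term yields $\partial_{[X,Y]}$ automatically.

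The key structural steps are then threefold. First, I would use the defining property of the Sugawara representation, $[T(e),h(A)]=-c\cdot h(eA)$, together with the fact that $T(e)$ is a quadratic expression in the current operators, to establish that $T$ is a genuine (projective) representation of the Krichever-Novikov vector field algebra $\V_L$: that is, $[T(e),T(e')]=T([e,e'])+\chi(e,e')\cdot\mathrm{id}$ for a suitable cocycle $\chi$. This is the standard output of the commutative Sugawara construction of \cite{KNFb}, so I would cite it rather than reprove it. Second, I would need the compatibility of the Kodaira-Spencer map $\rho$ with the Lie brackets, namely that $\rho([X,Y])=[\rho(X),\rho(Y)]$ modulo the subalgebra $\V^{(1)}\oplus\V^{\mathrm{reg}}$ in the sense of \refP{kernel}, so that $\rho$ descends to a Lie algebra homomorphism on the quotient $\V^{\mathrm{reg}}\backslash\V/\V^{(1)}$ along which $T$ is well-defined on the sheaf of coinvariants. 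Third, I would verify that the mixed terms $[\partial_X,T(\rho(Y))]-[\partial_Y,T(\rho(X))]$ combine with the bracket $[T(\rho(X)),T(\rho(Y))]=T([\rho(X),\rho(Y)])+\chi(\rho(X),\rho(Y))$ to produce exactly $T(\rho([X,Y]))$ up to a scalar; the discrepancy between $[\rho(X),\rho(Y)]$ and $\rho([X,Y])$—which lives in $\V^{(1)}\oplus\V^{\mathrm{reg}}$—must act trivially on the quotient module, which is precisely why passing to coinvariants is essential.

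The hard part will be controlling that the operator $T(\rho(X))$ is well-defined on the quotient sheaf $\mathcal F^{\infty/2}/\h^{\mathrm{reg}}$ independently of the ambiguity in $\rho(X)$ identified in \refP{kernel}. Concretely, $\rho(X)$ is only defined modulo $\V^{(1)}\oplus\V^{\mathrm{reg}}$, so I must check that changing $\rho(X)$ by an element of this subalgebra alters $T(\rho(X))$ only by an operator that preserves $\h^{\mathrm{reg}}\cdot\mathcal F^{\infty/2}$ and hence acts as zero (or as a scalar) on coinvariants. For the regular part $\V^{\mathrm{reg}}$ this uses that its Sugawara image annihilates the vacuum and respects the covariant quotient; for the non-negative part $\V^{(1)}$ one uses the admissibility of the module—that positive-degree elements act locally nilpotently—together with the almost-graded structure of $\V_L$ from \refSS{KNVir}. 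I expect the genuine labor to be bookkeeping the central cocycle: tracking how the scalar $\chi(\rho(X),\rho(Y))$ and the connection's own anomaly assemble into the single cocycle $\l(X,Y)$ appearing in the statement. Since the entire argument is a transcription of \cite{WZWN2,ShDiss} to the spectral-curve family at hand, I would present it as a verification that those hypotheses hold here and defer the detailed cocycle computation to the cited references.
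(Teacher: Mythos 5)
Your proposal takes the same route as the paper, because the paper in fact contains no proof of this theorem: immediately after the statement it notes that the result ``has been formulated and proven'' in \cite{WZWN2} in the CFT setting (moduli of pointed Riemann surfaces with fixed jets of coordinates) and simply asserts that the situation for the family of spectral curves is analogous and ``the proof is the same.'' Your plan---expand the commutator, invoke the commutative Sugawara construction of \cite{KNFb} for the projective action of $\V_L$, check well-definedness on coinvariants against the ambiguity of \refP{kernel}, and defer the cocycle bookkeeping to \cite{WZWN2,ShDiss}---is precisely the verification that the paper leaves implicit, so in substance you and the paper agree.

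One step of your sketch is misstated, though not fatally. In your second step you claim $\rho([X,Y])=[\rho(X),\rho(Y)]$ modulo $\V^{(1)}\oplus\V^{\mathrm{reg}}$, and in the third you say the discrepancy between $\rho([X,Y])$ and $[\rho(X),\rho(Y)]$ ``lives in'' that subalgebra. Since $\rho(X)=d_L^{-1}\partial_X d_L$ is a logarithmic derivative (a Maurer-Cartan-type object), the identity that actually holds is
\[
\rho([X,Y])=\partial_X\rho(Y)-\partial_Y\rho(X)+[\rho(X),\rho(Y)],
\]
so the discrepancy is not an element of the kernel subalgebra: it is exactly the derivative term that the mixed commutators $[\partial_X,T(\rho(Y))]-[\partial_Y,T(\rho(X))]$ contribute, because $\rho(Y)$ depends on the base point and $\partial_X$ differentiates it. Your third step already performs the correct bookkeeping (mixed terms plus Sugawara bracket assemble into $T(\rho([X,Y]))$ up to a scalar), so the argument closes once you drop the homomorphism claim; the place where \refP{kernel} and the passage to coinvariants genuinely enter is, as you correctly identify in your final paragraph, the well-definedness of each individual operator $T(\rho(X))$ on the quotient sheaf, not the bracket identity itself.
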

In \cite{WZWN2}, \refT{pr_flat} has been formulated and proven in
the Conformal Field Theory setup, i.e.for a certain moduli space
of Riemann surfaces with marked points and fixed jets of local
coordinates at those points. We assert that the situation here is
quite similar and the proof is the same. In analogy with CFT we
refer to the projective flat connection defined by \refT{pr_flat}
as {\it Knizhnik-Zamolodchikov} connection.

The horizontal sections of the sheaf of covariants are called {\it
conformal blocks}.

\subsection{Representation of Hamiltonian vector fields
and commuting Hamiltonians}\label{S:rep}

By \refT{pr_flat} $X\to\nabla_X$ is a projective representation of
the Lie algebra of vector fields on $\P^D$ in the space of
sections of the sheaf of covariants. Denote this representation by
$\nabla$. The restriction of $\nabla$ to the subalgebra of
Hamiltonian vector fields gives the projective representation of
that.
\begin{theorem}
If $X$, $Y$ are Hamiltonian vector fields such that their
Hamiltonians Poisson commute then
$[\nabla_X,\nabla_Y]=\l(X,Y)\cdot id$. If the Hamiltonians depend
only on action variables, then $[\nabla_X,\nabla_Y]=0$.
\end{theorem}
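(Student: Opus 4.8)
The plan is to combine the projective flatness of the Knizhnik-Zamolodchikov connection (\refT{pr_flat}) with the Hamiltonian structure on $\P^D$. The key identity is the bracket formula $[\nabla_X,\nabla_Y]=\nabla_{[X,Y]}+\l(X,Y)\cdot id$, valid for arbitrary vector fields $X,Y$ on $\P^D$. Restricting to Hamiltonian vector fields $X=X_f$, $Y=X_h$ where $f,h\in C^\infty(\P^D)$, the first statement reduces entirely to an assertion about the term $\nabla_{[X,Y]}$.

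First I would invoke the standard fact from symplectic geometry that the Poisson bracket realizes the Lie bracket of Hamiltonian vector fields: for any two functions $f$ and $h$ one has $[X_f,X_h]=X_{\{f,h\}}$, where $\{f,h\}=\w(X_f,X_h)$ is the Poisson bracket with respect to the Krichever-Phong symplectic form $\w$. Consequently, if the Hamiltonians of $X$ and $Y$ Poisson commute, i.e.\ $\{f,h\}=0$, then the vector field $X_{\{f,h\}}$ is identically zero, so $[X,Y]=0$ and hence $\nabla_{[X,Y]}=0$. Substituting into the projective flatness identity yields immediately $[\nabla_X,\nabla_Y]=\l(X,Y)\cdot id$, which is the first claim.

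For the second statement, I would pass to action-angle coordinates on (an open dense subset of) $\P^D$. When the Hamiltonians $H_a$ depend only on the action variables, the corresponding flows are the standard linear flows on the Liouville tori, and the cocycle $\l(X_a,X_b)$ must be shown to vanish. The natural way is to exploit the geometric origin of $\l$: it is determined by the Kodaira-Spencer images $\rho(X_a)$ and the Sugawara operators $T(\rho(X_a))$, together with the variation of the period integrals defining the local coordinates on the family of spectral curves. When $X$ is an action-variable Hamiltonian vector field, the associated deformation of the spectral curve is of a particularly simple type: along such a flow the spectral curve $\Sigma_L$ and its complex structure do not deform (the actions are functions of the periods, which are invariants of the spectral curve), so $\rho(X)=0$ in $\V^{\rm reg}\backslash\V/\V^{(1)}$, whence $T(\rho(X))=0$ and $\nabla_X=\partial_X$. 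The bracket of two such connection operators then equals $\partial_{[X,Y]}$, and since $[X,Y]=0$ the cocycle term must also vanish, giving $[\nabla_X,\nabla_Y]=0$.

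The main obstacle, and the step requiring the most care, is the precise claim that action-variable Hamiltonians produce trivial Kodaira-Spencer classes, i.e.\ that $\rho(X_{H})=0$ when $H$ depends only on actions. This rests on identifying the action variables with period integrals on the spectral curve and verifying that their Hamiltonian flows preserve the complex structure of $\Sigma_L$ and the marked point data that enter the definition of the transition functions $d_L$ in \refE{KodSp2}. One must check that the flow moves only along the Jacobian/angle directions, leaving the moduli of the pointed curve fixed, so that $\partial_X d_L$ is (up to the allowed ambiguity of $\V^{(1)}\oplus\V^{\rm reg}$) trivial. This is the genuinely nontrivial input; the vanishing of $\l$ then follows formally.
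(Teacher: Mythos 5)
Your proposal is correct and follows essentially the same route as the paper's proof: the first claim is exactly the projective flatness identity of \refT{pr_flat} combined with $[X_f,X_h]=X_{\{f,h\}}=0$, and the second claim rests on the same key observation that flows of Hamiltonians depending only on the action variables preserve the spectral curve, hence its complex structure and transition functions, so that $\rho(X)=0$ and $\nabla_X=\partial_X$. The step you flag as the nontrivial input (triviality of the Kodaira-Spencer class for action-variable flows) is precisely what the paper invokes, in the form ``for a Lax equation the spectral curve is an integral of motion.''
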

\begin{proof} The projective commutativity immediately follows from
\refT{pr_flat} since $[X,Y]=0$.

For a Lax equation the spectral curve is an integral of motion.
This means that the complex structure, hence the transition
functions, are invariant along the phase trajectories. Hence
$\rho(X)=0$ and $\nabla_X=\partial_X$. This implies the
commutativity of Hamiltonians depending only on the action
variables.
\end{proof}
\subsection{Unitarity}\label{S:Unit}
The goal of the section is to specify a subspace of the
representation space and introduce a scalar product there such
that the above representation $\nabla$ becomes unitary.

Recall that in general only real vector field Lie algebras admit
unitary representations in the classical sense, i.e. such that the
representation operators are skew-Hermitian. For
complexifications, a convenient way to define unitarity is as
follows \cite{KaRa}. Let $\mathcal G$ be a Lie algebra and $T$ its
representation in the space $V$. Consider an antilinear
antiinvolution $\dag$ on $\mathcal G$ ($X\to X^\dag$,
$X\in\mathcal G$). An Hermitian scalar product in $V$ is called
contravariant if $T(X)^\dag=T(X^\dag)$ where the $\dag$ on the
left hand side means the Hermitian conjugation. A pair consisting
of $T$ and a contravariant scalar product is called a unitary
representation of $\mathcal G$. The restriction of $T$ to the Lie
subalgebra of the elements such that $X^\dag=-X$ is unitary in the
classical sense.

To construct a contravariant Hermitian scalar product in the space
of the representation $\nabla$ we first introduce a point-wise
scalar product in the sheaf of covariants, and then integrate it
over the phase space $\P^D$ by an invariant volume form.

Over every point in $\P^D$ we introduce an Hermitian scalar
product in ${\mathcal F}^{\infty/2}$ declaring semi-infinite
monom\-ials with basis entries to be orthonormal
(\cite[p.39]{KaRa}, \cite{KNFb}). Below (\refT{unit}) we prove
that it gives a well-defined point-wise scalar product on
coinvariants.

By Poincar\'e theorem the symplectic form and its degrees are
absolute integral invariants of Hamiltonian phase flows. Hence
$\w^p/p!$ defines an invariant measure with respect to Hamiltonian
phase flows. Let $\L^2(C,\w^p/p!)$ be the space of quadratically
integrable sections of the  sheaf $C$ with respect to that measure
where the square over a point is given by the above point-wise
scalar product.
\begin{theorem}\label{T:unit}
The representation $\nabla : X\to\nabla_X$  of the Lie algebra of
Hamiltonian vector fields on $\P^D$ in the subspace of smooth
sections  in $\L^2(C,\w^p/p!)$ is unitary.
\end{theorem}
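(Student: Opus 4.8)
The plan is to verify the contravariance identity $\nabla_X^\dag=\nabla_{X^\dag}$ directly, where on the right $\dag$ is the antilinear antiinvolution on the Lie algebra of Hamiltonian vector fields sending $X_f$ to $-X_{\bar f}$ (so that real Hamiltonians furnish the genuinely skew-Hermitian, classically unitary, part), and on the left $\dag$ is Hermitian conjugation for the $\L^2$-product. Since $\nabla_X=\partial_X+T(\rho(X))$ is $\C$-linear in $X$ and both $\partial$ and $X\mapsto\rho(X)$ intertwine conjugation, it suffices to treat a real field $X$ (so $X^\dag=-X$) and prove that $\nabla_X$ is skew-Hermitian. I would do this by establishing skew-Hermiticity of the two summands, having first arranged that everything descends to the sheaf of covariants.

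First I would fix, over each $L\in\P^D$, the Hermitian form on ${\mathcal F}^{\infty/2}$ obtained by declaring the semi-infinite monomials orthonormal; by \cite{KaRa,KNFb} this form is contravariant for the antiinvolution on $\hb$, i.e. $h(A)^\dag=h(A^\dag)$. The first point to settle is that this point-wise form passes to the quotient ${\mathcal F}^{\infty/2}/\h^{reg}$. Here I would use that the antiinvolution carries $\h^{reg}$, the functions regular at $P_\infty$, to the complementary subalgebra of functions regular at the $P_i$'s, so that the contravariant form pairs $\h^{reg}\cdot{\mathcal F}^{\infty/2}$ trivially against the vectors annihilated by $\h^{reg}$; identifying the covariants with the orthogonal complement of $\h^{reg}\cdot{\mathcal F}^{\infty/2}$ then yields a well-defined point-wise scalar product, the finite rank of the bundle of conformal blocks guaranteeing this complement has the right size. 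This is the content promised as \refT{unit} in \refS{Unit}.

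For the Sugawara summand I would argue fibrewise: because $T(e)$ is built quadratically, with normal ordering, from the operators $h(A)$, contravariance of the point-wise form propagates to $T(e)^\dag=T(e^\dag)$ for $e\in\V_L$, exactly as in the commutative Sugawara construction of \cite{KNFb}. It then remains to check that the Kodaira-Spencer map intertwines the antiinvolutions, $\rho(X^\dag)=\rho(X)^\dag$; granting this, $T(\rho(X))$ is skew-Hermitian in every fibre for real $X$, hence skew-Hermitian as a point-wise operator on $\L^2(C,\w^p/p!)$. For the base summand I would invoke Poincar\'e's theorem: the Liouville volume $\w^p/p!$ is an absolute integral invariant of the Hamiltonian flow of $X$, so that flow preserves the measure and $\partial_X$, computed in the fibrewise-constant frame of semi-infinite monomials, is skew-symmetric under the $\L^2$-pairing, the divergence term vanishing by invariance of the measure. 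Adding the two skew-Hermitian contributions gives $\nabla_X^\dag=-\nabla_X=\nabla_{X^\dag}$, which is unitarity.

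The main obstacle I anticipate is not either skew-Hermiticity estimate in isolation but their reconciliation on the quotient. Neither $\partial_X$ nor $T(\rho(X))$ need descend to the covariants or preserve the point-wise form on its own; what must be shown is that the variation of the fibrewise identification along $X$ is compensated exactly by the Sugawara term, so that $\nabla$ is a \emph{metric} connection for the point-wise scalar product. Concretely I would have to rule out a spurious contribution to the integration-by-parts from the $X$-dependence of the orthonormal frame, and check that it is absorbed into $T(\rho(X))$. A useful consistency check on this compatibility is that, taking $\dag$ of the curvature relation in \refT{pr_flat}, unitarity forces the cocycle $\l$ to be purely imaginary on real fields; verifying this last compatibility is the crux of the argument.
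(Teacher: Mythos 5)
Your overall skeleton coincides with the paper's: split $\nabla_X=\partial_X+T(\rho(X))$, obtain skew-symmetry of $\partial_X$ from Poincar\'e invariance of the Liouville volume $\w^p/p!$, obtain $T(\rho(X))^\dag=T(\rho(X)^\dag)$ from contravariance of the monomial Hermitian form under the abelian Sugawara construction of \cite{KNFb}, and first check that the point-wise form descends to covariants. But there is a genuine gap exactly at the point you yourself flag with ``granting this'': the identity $\rho(X^\dag)=\rho(X)^\dag$. You \emph{define} $\dag$ on Hamiltonian fields by $X_f\mapsto -X_{\bar f}$ and then need the Kodaira--Spencer map to intertwine this with the antiinvolution used on the Sugawara side; nothing in your argument supplies that, and there is no evident reason why complex conjugation of Hamiltonians should correspond, through the family of spectral curves, to the relevant antiinvolution on $\V_L$. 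The paper resolves this by going in the opposite direction: the antiinvolution on $\V$ is the one induced by the embedding $r$ into $\mathfrak{a}_\infty$ (namely $r(e_i)\mapsto -r(e_i)^t$ on the Krichever--Novikov basis, extended antilinearly); it preserves $\V^{(1)}_+\oplus\V^{(1)}_-$, which by \refP{kernel} is precisely the ambiguity (the kernel) of $\rho^{-1}$, and therefore it \emph{pushes down} through the inverse Kodaira--Spencer map to a well-defined antilinear antiinvolution on $\T_L\P^D$. With that definition $\rho(X^\dag)=\rho(X)^\dag$ holds by construction, and the rest of your computation goes through verbatim. So you must either adopt the paper's definition of $\dag$ on the tangent space or actually prove your intertwining claim; as it stands this is a missing idea, not a routine verification.

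Two smaller points. Your argument that the point-wise form descends to covariants (via $\dag$ carrying $\h^{reg}$ to the functions regular at the $P_i$'s, plus an orthogonal-complement identification) again rests on an unproven mapping property of $\dag$; the paper's argument is more elementary and uses no antiinvolution at all: monomials lying in $\h^{reg}\cdot{\mathcal F}^{\infty/2}$ have almost-graded degrees strictly below those of the monomials representing coinvariants, and distinct quasihomogeneous components are orthogonal by the very definition of the form, so the quotient form is independent of representatives. Finally, the ``crux'' with which you end --- that $\nabla$ be a metric connection, i.e.\ that frame-variation terms in the integration by parts be absorbed by the Sugawara summand, with the consistency check that $\l$ be purely imaginary on real fields --- is not addressed in the paper either: the paper works throughout in the fixed frame of semi-infinite monomials, in which $\partial_X$ is skew-Hermitian by measure invariance alone, and never needs the compatibility you describe. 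Since you explicitly leave that step unverified, your proposal is, by its own account, incomplete there as well, whereas the paper's formulation avoids the issue rather than solving it.
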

\begin{proof}
First let us prove that the point-wise scalar product on
${\mathcal F}^{\infty/2}$ is well-defined on coinvariants. The
different quasihomogeneous almost-graded components of the module
${\mathcal F}^{\infty/2}$ are orthogonal. The point-wise
coinvariants are defined as $\V/\g^{\rm reg}\V$. The degrees of
mon\-omials occuring in the subspace $\g^{\rm reg}\V$ (including
summands in linear combinations) are obviously less then those of
monom\-ials which form coinvariants. Hence these spaces of
mono\-mials are orthogonal and the induced scalar product on the
quotient does not depend on the choice of representatives in the
equivalence classes, and we get a well-defined point-wise scalar
product on coinvariants.

Next let us construct a certain antilinear antiinvolution on the
tangent vector fields on $\P^D$. We will push it down from $\V$ by
the inverse to the Kodaira-Spencer mapping making use of the
double coset description of the tangent space to moduli space of
curves ($T_L {\mathcal M}=\V_-^{(1)}\backslash\V/\V_+^{(1)}$). The
Lie subalgebras $\V_\pm^{(1)}$ will be defined in the end of the
paragraph. Recall from \cite[p.39]{KaRa} that the convenient
antiinvolution in $\V$ is induced by its embedding into the Lie
algebra ${\mathfrak a}_\infty$ of infinite matrices with finite
number of diagonals. Following \cite{KaRa} we denote this
embedding by $r$. The antilinear antiinvolution in ${\mathfrak
a}_\infty$ amounts in $r(e_i)\to -r(e_i)^t$ for the elements of
the Krichever-Novikov base, with the consequent antilinear
continuation to the complexification. We go on denoting this
antiinvolution by $\dag$. For example in the case of two marked
points $\{P_1,P_\infty\}=\{P_+,P_-\}$ the $\dag$ sends the
subspace $\V^{(p)}_+$ of vector fields regular at $P_+$ and
vanishing there with the order at least $p$ to the similar
subspace $\V^{(p)}_-$ at $P_-$ (for an arbitrary $p\in\Z$). Hence
$\V^{(p)}_+\oplus\V^{(p)}_-$ is invariant under the
antiinvolution. By \refP{kernel} $\V^{(1)}_+\oplus\V^{(1)}_-=\ker
\rho^{-1}$ in this case. Hence $\dag$ is well defined on the
tangent space to $\P^D$ at the corresponding point.

For a local vector field $X$ on $\P^D$ by $\partial_X$ we mean the
corresponding derivative of local (for example finitary) smooth
sections of the sheaf~$C$. For a Hamiltonian $X$ the $\partial_X$
is skew-Hermitian by $X$-invariance of the measure, hence
$\partial_X^\dag=-\partial_X$. Further on, for a real Hamiltonian
vector field $X$ (i.e. $X^\dag=-X$) we have
$-\partial_X=\partial_{X^\dag}$, hence
$\partial_X^\dag~=~\partial_{X^\dag}$. By complex antilinearity we
obtain the same relation for all Hamiltonian vector fields.

Let $\langle\cdot |\cdot\rangle$ be the above introduced
point-wise scalar product, and $(\cdot |\cdot
)=\frac{1}{p!}\int_{\P^D}\langle\cdot |\cdot\rangle\w^p$. By
\cite{KNFb}  $\langle\cdot |\cdot\rangle$ is a contravariant form
with respect to the Sugawara representation (in the abelian case).
This implies that the $(\cdot |\cdot )$ possesses the same
property, hence $T(\rho(X))^\dag=T(\rho(X)^\dag)$. By definition
$\rho(X)^\dag=\rho(X^\dag)$. Hence
$T(\rho(X))^\dag=T(\rho(X^\dag))$.

By the last two paragraphs we have for a Hamiltonian vector
field~$X$
\[ \partial_X^\dag=\partial_{X^\dag}\
\text{and}\ {T(\rho(X))}^\dag=T(\rho(X^\dag)).
\]
Hence
\[
 (\partial_X+T(\rho(X)))^\dag=\partial_{X^\dag}+T(\rho(X^\dag))),
\]
i.e.
\[
 (\nabla_X)^\dag=\nabla_{X^\dag}.
\]
\end{proof}


}

\end{document}